\documentclass{amsart}
\usepackage{amsmath,amssymb,amsfonts}
\usepackage{enumerate}
\usepackage{tikz-cd}
\usepackage{bm}
\usepackage[total={6.5in,8.75in},
top=1.2in, left=0.9in, includefoot]{geometry}

\newtheorem{theorem}{Theorem}

\newtheorem{lemma}[theorem]{Lemma}

\theoremstyle{definition}
\newtheorem{definition}[theorem]{Definition}
\newtheorem{example}[theorem]{Example}
\newtheorem{remark}[theorem]{Remark}

\newcommand{\Z}{\mathbb{Z}}
\newcommand{\F}{\mathbb{F}_2}
\newcommand{\G}{\mathbb{G}}
\newcommand{\Gop}{{\mathbb{G}^{\mathrm{op}}}}
\newcommand{\Set}{\mathrm{Set}}
\newcommand{\oCat}{\omega\mathrm{Cat}}
\newcommand{\chains}{\mathrm{C}_\bullet}
\newcommand{\coAlg}{\mathrm{coAlg}}
\newcommand{\id}{\mathrm{id}}
\newcommand{\Hom}{\mathrm{Hom}}

\hyphenation{cosym-met-ric}
\hyphenation{mod-ule}
\hyphenation{anon-y-mous}

\begin{document}

\title{An algebraic representation of globular sets} 

\author{A. M. Medina-Mardones}             

\email{amedinam@nd.edu}

\address{Mathematics Department,
         University of Notre Dame,
         255 Hurley, Notre Dame, IN 46556,
         USA}
     
\keywords{globular sets, higher categories, $E_\infty$-structures, Steenrod cup-$i$ products}

\begin{abstract}
We describe a fully faithful embedding of the category of (reflexive) globular sets into the category of counital cosymmetric $R$-coalgebras when $R$ is an integral domain. This embedding is a lift of the usual functor of $R$-chains and the extra structure consists of a derived form of cup coproduct. Additionally, we construct a functor from group-like counital cosymmetric \mbox{$R$-coalgebras} to $\omega$-categories and use it to connect two fundamental constructions associated to oriented simplices: Steenrod's cup-$i$ coproducts and Street's orientals. The first defines the square operations in the cohomology of spaces, the second, the nerve of higher-dimensional categories.
\end{abstract}

\maketitle

\section{Introduction}
Globular sets are presheaves over a category $\G$ whose objects are non-negative integers. They generalize directed graphs and constitute one of the major geometric shapes for higher category theory, providing models for strict and non-strict higher-dimensional categories when enriched with further structure. 

We depict the representable globular set $\G_n$ for small values of $n$: 
\begin{center}
	\begin{tikzpicture} [scale= .35]
	\draw node at (-4,0){ };
	\draw node at (4,0){ };
	\draw[fill] (0,0) circle  [radius=3pt];
	
	\draw node at (0,2.8){$n=0$} ;
	\draw node at (0,-2.4){} ;
	\end{tikzpicture}
	\qquad\qquad\!\!\!\!
	\begin{tikzpicture} [scale= .35]
	\draw[fill] (-4,0) circle  [radius=3pt];
	\draw[fill] (4,0) circle  [radius=3pt];
	
	\draw [->] (-4,0) [shorten >=0.2cm, shorten <=0.2cm,->] to (4,0) ;
	
	\draw node at (0,2.9){$n=1$} ;
	\draw node at (0,-2.4){} ;
	\end{tikzpicture}
	
	\begin{tikzpicture} [scale= .35]
	\draw[fill] (-4,0) circle  [radius=3pt];
	\draw[fill] (4,0) circle  [radius=3pt];
	
	\draw [->] (-4,0) [shorten >=0.2cm, shorten <=0.2cm,->, out=60,in=120] to (4,0) ;
	\draw [->] (-4,0) [shorten >=0.2cm, shorten <=0.2cm,->, out=-60,in=-120] to (4,0) ;
	
	\draw [->] (0,1.5) [thick] to (0,-1.5) ;
	
	\draw node at (0,3){$n=2$} ;
	\end{tikzpicture}
	\qquad\qquad
	\begin{tikzpicture} [scale= .35]
	\draw[fill] (-4,0) circle  [radius=3pt];
	\draw[fill] (4,0) circle  [radius=3pt];
	
	\draw [->] (-4,0) [shorten >=0.2cm, shorten <=0.2cm,->, out=60,in=120] to (4,0) ;
	\draw [->] (-4,0) [shorten >=0.2cm, shorten <=0.2cm,->, out=-60,in=-120] to (4,0) ;
	
	\draw [->] (-1,2) [thick,shorten >=0.3cm, shorten <=0.3cm,->, out=-120,in=120] to (-1,-2) ;
	\draw [->] (1,2) [thick,shorten >=0.3cm, shorten <=0.3cm,->, out=-60,in=60] to (1,-2) ;
	
	\draw [->] (-1,0) [very thick] to (1,0) ;
	
	\draw node at (0,3){$n=3$} ;
	\end{tikzpicture}
\end{center}
The globular set $\partial\, \G_{n+1}$ obtained by removing the identity from $\G_{n+1}$ models the $n$-sphere together with its antipodal map. We are interested in the functor $\chains$ of chains from globular sets to differential graded $R$-modules. Let $W$ be defined as the colimit of the diagram
\begin{equation*}
\chains(\partial\, \G_0) \to \chains(\partial\, \G_{1}) \to \cdots
\end{equation*}
induced from a standard set of inclusions $\G_n \to \G_{n+1}$. We notice that the antipodal map makes $W$ into a free differential graded $R[\Sigma_2]$-module. For any globular set $X$ we will construct a natural $R[\Sigma_2]$-module chain map
\begin{equation*}
\Delta : W \otimes \chains(X) \to \chains(X) \otimes \chains(X)
\end{equation*}
together with a natural chain map $\varepsilon : \chains(X) \to R$ satisfying appropriate counitality relations. We can think of this structure as a lift to the chain level of the counital cocommutative $R$-coalgebra on the homology of $X$ (a structure pre-dual to the usual cup product in cohomology).

We will show that when $R$ is an integral domain, this lift of the functor of chains is a fully faithful embedding of the category of globular sets into the category of counital cosymmetric $R$-coalgebras. We can think of this result as a non-linear globular form of the Dold-Kan Theorem. In more diagramatic language, our map fits into the following commutative diagram
\begin{equation*}
\begin{tikzcd}
\Set^\Gop \ar[d] \ar[r, dashed] \ar[rd, "\chains"]& \mathrm{\coAlg_R} \ar[d] \\
\mathcal{M}\mathrm{od}_R^{\,\Gop} \ar[r] & \mathrm{Ch}_{R}
\end{tikzcd}
\end{equation*}
where the lower triangle consists of a free functor followed by a fully faithful embedding and the upper triangle consists of a fully faithful embedding followed by a forgetful functor.

We will then focus on the full subcategory $\coAlg_{R}^{gl}$ of group-like counital cosymmetric $R$-coalgebras and on a model for strict higher-dimensional categories known as $\omega$-categories. We will describe a functor, similar to those used by Street, Brown, and Steiner in their respective studies of parity complexes, linear $\omega$-categories, and augmented directed complexes, from $\coAlg_{R}^{gl}$ to $\oCat$ behaving like a free functor on pasting diagrams. We will use our version to relate two fundamental constructions on oriented simplices: Steenrod's cup-$i$ coproducts and Street's orientals. The first defines the square operations on the cohomology of spaces, the second, the nerve of $\omega$-categories.

\subsection*{Acknowledgments} 
We would like to thank Chris Schommer-Pries, Tim Campion, Stephan Stolz, Dennis \mbox{Sullivan}, Richard Steiner, Manuel Rivera, Mahmoud Zeinalian, and the anonymous referee for their insights, questions, and comments about this project. 

\section{Globular sets and counital cosymmetric $R$-coalgebras}

In this section we will describe how to represent, when $R$ is an integral domain, the category of globular sets algebraically as a full subcategory of the category of counital cosymmetric $R$-coalgebras. These are models for counital $R$-coalgebras commutative up to coherent homotopies ($E_\infty$-coalgebras are examples). We will also review an important construction of Steenrod providing concrete examples of such $R$-coalgebra when $R = \F$ and used to define his square operations.

\subsection{Globular sets}

The \textbf{globe category} $\G$ has set of objects the non-negative integers and its morphisms are generated by
\begin{equation*}
\sigma_n, \tau_n : n \to n+1 \qquad \iota_n : n \to n-1 \quad \ \ \,
\end{equation*}
subject to the relations	
\begin{align} \label{equation: relations between generating morphisms}
\begin{aligned}
\tau_n \, \tau_{n-1} =\ & \sigma_n \, \tau_{n-1}   \\   \iota_{n+1} \, \tau_{n} =\ & \id_{n}
\end{aligned}
\quad & \quad
\begin{aligned}
\sigma_n  \, \sigma_{n-1} =\ & \tau_n \, \sigma_{n-1} \\   \iota_{n+1} \, \sigma_{n} =\ & \id_{n}.
\end{aligned}
\end{align}

Let $\Set$ be the category of small sets. We denote the category of contravariant functors from $\G$ to $\Set$ by $\Set^\Gop$ and refer to it as the category of \textbf{globular sets}. For a globular set $X$ we use the notation
\begin{equation*}
X_n = X(n) \qquad t_n = X(\tau_n) \qquad s_n = X(\sigma_n) \qquad i_n = X(\iota_n).
\end{equation*}
Furthermore, abusing notation, we let $t_n : X(k) \to X(n)$ stand for any composition of the form $t_n r$ where $r : X(k) \to X(n+1)$ is induced from an arbitrary morphism. Thanks to (\ref{equation: relations between generating morphisms}) this map is independent of $r$ and determined by the integer $k$. We follow a similar convention for $s_n$. 

\subsection{Augmented differential graded $R$-modules}

Let $R$ be a commutative and unital ring. The category of differential (homologically) graded $R$-modules concentrated in non-negative degrees is denoted $\mathrm{Ch}_R$. We reserve the word chain complex for when $R$ equals $\Z$.

Let $C$ be a differential graded $R$-module and $n$ a non-negative integer, we denote
\begin{equation*}
C_{\leq n} = C_0 \oplus C_1 \oplus \cdots \oplus C_n.
\end{equation*}

A pair $(C, \varepsilon)$ with $C$ and $\varepsilon : C \to R$ in $\mathrm{Ch}_R$ is called an \textbf{augmented differential graded $R$-module} and a morphism between two of them is a morphism of underlying differential graded $R$-modules making the diagram
\begin{center}
	\begin{tikzcd}
	C' \ar[rr] \ar[dr, "\varepsilon'"'] & \hspace*{-1cm} & C \ar[dl, "\varepsilon"] \\
	& R &
	\end{tikzcd}
\end{center}
commutative.

The functor $\chains : \Set^{\Gop} \to \mathrm{Ch}_R$ is defined for $X \in \Set^{\,\Gop}$ by
\begin{equation*}
\mathrm{C}_n(X) = R\{X_n\}\, \big/\, R\{i_{n}(X_{n-1})\} \qquad \partial_n = t_{n-1} - s_{n-1}\,.
\end{equation*}
It admits a natural lift to the category of augmented differential graded $R$-modules by defining for $x \in X_n$
\begin{equation*}
\varepsilon(x) = \begin{cases}
1 & n = 0 \\ 0 & n \neq 0.
\end{cases}
\end{equation*}

\subsection{Counital cosymmetric $R$-coalgebras}

Let $\Sigma_2$ be the group with one non-identity element $T$. Let us consider the following resolution of $R$ by free $R[\Sigma_2]$-modules: 
\begin{equation*}
W \quad = \quad R[\Sigma_2] \stackrel{\ 1-T}{\longleftarrow} R[\Sigma_2] \stackrel{\ 1+T}{\longleftarrow} R[\Sigma_2] \stackrel{\ 1-T}{\longleftarrow} \cdots
\end{equation*}
and let $\varepsilon_W : W \to R$ be the unique $R[\Sigma_2]$-linear map extending the identity $R \to R$.

Given any differential graded $R$-module $C$ we make $C \otimes C$ into a differential graded $R[\Sigma_2]$-module using the transposition of factors $T(x \otimes y) = (-1)^{rs} y \otimes x$ where $r$ and $s$ are the degrees of $x$ and $y$.

A \textbf{counital cosymmetric $R$-coalgebra} is an augmented differential graded \mbox{$R$-module} $(C, \varepsilon)$ together with 
\begin{equation*}
\Delta : W \otimes C \to C \otimes C
\end{equation*}
an $R[\Sigma_2]$-linear chain map making the following diagrams commute:
\begin{center}
	\begin{tikzcd}
	W \otimes C \ar[dr, "\varepsilon_W \otimes \id"']\ar[r, "\Delta"] & C \otimes C \ar[d, "1 \otimes \varepsilon"]\\ & C
	\end{tikzcd}
	\qquad \qquad
	\begin{tikzcd}
	W \otimes C \ar[dr, "\varepsilon_W \otimes \id"']\ar[r, "\Delta"] & C \otimes C \ar[d, "\varepsilon \otimes 1"]\\ & C\ .
	\end{tikzcd}
\end{center} 

A \textbf{coalgebra map} between counital cosymmetric $R$-coalgebras is a map $f$ of underlying augmented differential graded $R$-modules making the following diagram commute:

\begin{center}
	\begin{tikzcd}
	W \otimes C' \ar[d, "\Delta'"'] \ar[r, "\id \otimes f"] & W \otimes C \ar[d, "\Delta"]\\
	C' \otimes C' \ar[r, "f \otimes f"] & C \otimes C\, .
	\end{tikzcd}
\end{center}  

We denote the category of counital cosymmetric $R$-coalgebras with coalgebra maps by $\coAlg_R$.	

We use the adjunction isomorphism 
\begin{equation*}
\Hom_{R[\Sigma_2]}(W \otimes C , C \otimes C) \to \Hom_{R[\Sigma_2]} \big( W, \Hom(C, C \otimes C) \big)
\end{equation*}
to represent $\Delta$ by a collection of maps $\Delta_k : C \to C \otimes C$ satisfying
\begin{equation} \label{equation: Delta map in terms of Delta_k maps}
\partial \Delta_k - (-1)^{k} \Delta_k \partial = \big(1 + (-1)^k T \big)\Delta_{k-1}
\end{equation} 
with the convention that $\Delta_{-1} = 0$.

\subsection{Steenrod cup-$i$ coalgebras}

Alexander-Whitney's approximation to the diagonal map
\begin{equation*}
\Delta_0 : \chains \to \chains \otimes \chains
\end{equation*}
defines a natural non-cocommutative coproduct on the integral chains of any simplicial set whose linear dual descends to the commutative cup product on its cohomology. In \cite{steenrod1947products}, Steenrod constructed a cosymmetric $\Z$-coalgebra 
\begin{equation*}
\Delta : W \otimes \chains \to \chains \otimes \chains
\end{equation*} 
extending the Alexander-Whitney coproduct, which when considered with $\F$-coefficients defines the square operations
\begin{equation*}
Sq^k : H^\bullet(- ; \F) \to H^{\bullet + k}(- ; \F).
\end{equation*}

Since these operations are homological in nature, any pair of natural homotopy equivalent cosymmetric $\F$-coalgebra structures give rise to isomorphic square operations. Yet, Steenrod's original construction appears ubiquitously in the literature in various equivalent forms. For example, in \cite{medina2018algebraic}, the author finds it in the action of a finitely presented prop arising from just three maps: Alexander-Whitney's diagonal, the augmentation, and the join map. In \cite{medina2018cellular}, it is induced from the action of a cellular $E_\infty$-operad on the geometric realization of cubical sets. And in \cite{mcclure2003multivariable} and \cite{berger2004combinatorial},	McClure-Smith and Berger-Fresse find it in the action of their respective Sequence and Barratt-Eccles operads. 

The universality of this cosymmetric $\F$-coalgebra is formalized via an axiomatic characterization in \cite{medina2018axiomatic}. In this note, we provide further evidence for its fundamental nature by deriving from it in Theorem \ref{theorem steenrod and street} another fundamental construction: the nerve of higher-dimensional categories. 

Let us review its description as presented in \cite{medina2018axiomatic}. Let $P{n \choose k}$ be the set of all $U = \{0 \leq u_1 < \dots < u_k \leq n\}$. For any such $U$ define the composition of face maps 
\begin{equation*}
d_U = d_{u_1} \cdots \, d_{u_k}
\end{equation*}
and the pair
\begin{equation*}
\begin{split}
U^- &= \{ u_i \in U\, :\, u_i  \not\equiv i \text{ mod } 2 \} \\
U^+ &= \{ u_i \in U\, :\, u_i  \equiv i \text{ mod } 2 \}.
\end{split}
\end{equation*}

\begin{definition} (\cite{medina2018axiomatic}) \label{definition: steenrod coalgebra}
	For any simplicial set $X$ its \textbf{Steenrod cup-$i$ coalgebra} $\big( \chains(X; \F), \Delta, \varepsilon \big)$ is defined by
	\begin{equation} \label{equation: steenrod diagonal}
	\Delta_i (x)\ =
	\sum_{U \in P{n \choose n-i}} d_{U^-}\, x \otimes d_{U^+}\,x
	\end{equation}
	and 
	\begin{equation*}
	\varepsilon(x) = \begin{cases}
	1 & n = 0 \\ 0 & n \neq 0
	\end{cases}
	\end{equation*}
	where $x \in X_n$.
\end{definition}

\begin{remark}
	The cup product and Steenrod squares in cohomology are obtained from the Steenrod cup-$i$ coalgebra by defining
	\begin{equation*}
	[\alpha] \smallsmile [\beta] = \big[ (\alpha \otimes \beta) \Delta_0 \big]
	\end{equation*}
	and
	\begin{equation*}
	Sq^k[\alpha] = \big[ (\alpha \otimes \alpha) \Delta_{|\alpha| - k} \big]. 
	\end{equation*}
	The relationship between these two cohomological structures is known as the Cartan Formula. In \cite{medina2019effective}, using the definitions above, the author gave an effective chain level proof of the Cartan Formula, and in  \cite{medina2018persistence},  based on (\ref{definition: steenrod coalgebra}),  a novel algorithm for the computation of Steenrod squares of finite simplicial complexes was developed and added to the field of topological data analysis.
\end{remark}

\subsection{Globular $R$-coalgebras}

We now describe a counital cosymmetric $R$-coalgebra naturally associated to a globular set and state our main theorem.

\begin{definition} \label{definition: globular coalgebra}
	For any globular set  $X$ its \textbf{globular $R$-coalgebra} $\big( \chains(X; R), \Delta, \varepsilon \big)$ is defined by
	\begin{equation*}
	\Delta_k(x) =
	\begin{cases}
	0 & n < k \\ x \otimes x & n = k \\ 
	t_k x \otimes x \, + (-1)^{(n+1)k}\, x \otimes s_k x & k < n
	\end{cases}
	\end{equation*}
	and
	\begin{equation*}
	\varepsilon(x) =
	\begin{cases}
	1 & n = 0 \\ 0 & n > 0.
	\end{cases}
	\end{equation*}
	where $x \in X_n$.
\end{definition}

\begin{theorem} \label{theorem: dold-kan correspondence for globular sets}
	Let $R$ be an integral domain. The assignment 
	\begin{equation*}
	X \to (\chains(X; R), \Delta, \varepsilon)
	\end{equation*}  		
	induces a full and faithful embedding of $\Set^\Gop$ into $\coAlg_{R}$.
\end{theorem}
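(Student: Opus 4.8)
The plan is to verify three things in order: (i) that the assignment is well-defined — i.e.\ that $(\chains(X;R),\Delta,\varepsilon)$ really is a counital cosymmetric $R$-coalgebra and that a globular map $f\colon X\to Y$ induces a coalgebra map; (ii) that the functor is faithful; and (iii) that it is full. Well-definedness is mostly a calculation: one checks that the formulas for $\Delta_k$ satisfy the relations (\ref{equation: Delta map in terms of Delta_k maps}), using $\partial_n = t_{n-1}-s_{n-1}$ together with the globular identities (\ref{equation: relations between generating morphisms}) (in the form $t_k t_{k-1}=t_k s_{k-1}$, etc., and the resulting "absorption" conventions for $t_k$, $s_k$ on higher cells); the counitality triangles reduce to the observation that $\varepsilon$ kills everything of positive degree and that $\varepsilon(t_k x)=\varepsilon(s_k x)=\varepsilon(x)$ on degenerate-target terms, so exactly one summand of $\Delta_k(x)$ survives after applying $\id\otimes\varepsilon$ or $\varepsilon\otimes 1$. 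Naturality of $\Delta$ and $\varepsilon$ under $\chains(f)$ is immediate from $f$ commuting with all the structure maps $t_k,s_k,i_k$. Faithfulness is also easy: $\chains(f)$ on a generator $[x]$ with $x\in X_n$ and $x\notin i_n(X_{n-1})$ records $[f(x)]$, so $\chains(f)$ determines $f$ on non-degenerate cells, hence on all cells by the reflexive/degeneracy structure, hence $f$ is recoverable from $\chains(f)$.

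The substance of the theorem is fullness: given a coalgebra map $g\colon (\chains(X;R),\Delta^X,\varepsilon^X)\to(\chains(Y;R),\Delta^Y,\varepsilon^Y)$, I must produce a globular map $f\colon X\to Y$ with $\chains(f)=g$. The key idea is to extract, from the coalgebra structure, enough to pin down where $g$ sends a basis element. First I would use the equation $n=k$ case: $\Delta_n^X(x)=x\otimes x$ for $x$ of degree $n$. Compatibility of $g$ with $\Delta_n$ forces $\Delta_n^Y(g x)=(g\otimes g)(x\otimes x)=gx\otimes gx$. Writing $gx=\sum_j c_j\,[y_j]$ as an $R$-linear combination of basis elements of $\chains_n(Y;R)$, the left side $\Delta_n^Y(gx)$ expands — via linearity of the chain-level $\Delta_n^Y$ and the definition $\Delta_n^Y[y]=[y]\otimes[y]$ — as $\sum_j c_j\,[y_j]\otimes[y_j]$, while the right side is $\big(\sum_j c_j[y_j]\big)\otimes\big(\sum_j c_j[y_j]\big)=\sum_{j,l} c_jc_l\,[y_j]\otimes[y_l]$. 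Since the $[y_j]\otimes[y_l]$ are $R$-linearly independent in $\chains_n(Y;R)^{\otimes 2}$, comparing coefficients gives $c_j^2=c_j$ and $c_jc_l=0$ for $j\neq l$; as $R$ is an integral domain this forces exactly one $c_j$ to equal $1$ and the rest to vanish. Hence $gx$ is itself a single basis element $[f(x)]$ for a well-defined $f(x)\in Y_n$ (modulo degeneracies — and one checks separately, using $\varepsilon$, that the top cell is non-degenerate iff its image is). This is the step where the integral-domain hypothesis is essential and where the bulk of the argument lives.

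Having produced a function $f\colon X_n\to Y_n$ for each $n$ with $g=\chains(f)$, it remains to check $f$ is a morphism of globular sets, i.e.\ commutes with $s_k$, $t_k$, and $i_k$. For $i_k$ (degeneracies): $g$ is a chain map landing in the quotient by degenerate cells, so $f$ respects $i_n$ by construction of $\chains$. For $s_k$ and $t_k$ with $k<n$: feed $x\in X_n$ into the coalgebra-map square for $\Delta_k$, $k<n$. On one side, $(g\otimes g)\Delta_k^X(x)=(g\otimes g)\big(t_k x\otimes x+(-1)^{(n+1)k}x\otimes s_k x\big)=f(t_k x)\otimes f(x)+(-1)^{(n+1)k}f(x)\otimes f(s_k x)$; on the other side $\Delta_k^Y(gx)=\Delta_k^Y[f(x)]=t_k f(x)\otimes f(x)+(-1)^{(n+1)k}f(x)\otimes s_k f(x)$ (using $f(x)\in Y_n$, $k<n$). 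Equating and projecting onto the $[f(x)]\otimes(-)$ and $(-)\otimes[f(x)]$ summands — which are separated because in the tensor-square basis the first tensor factor being $[f(x)]$ is a detectable condition — yields $f(t_k x)=t_k f(x)$ and $f(s_k x)=s_k f(x)$ (and for the boundary cases one uses the absorption conventions and compatibility with $\partial$). One small subtlety to handle: the equalities $f(t_k x)=t_k f(x)$ etc.\ are literal equalities in $Y$, which needs that in the relevant cases $f(x)$ is a non-degenerate cell (so that "$[f(x)]$" unambiguously names a cell) — this is where I'd invoke the $\varepsilon$-bookkeeping and, in degree $0$, the first counitality triangle, to separate the degenerate from the non-degenerate case. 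Assembling these, $f$ is a globular map with $\chains(f)=g$, proving fullness; combined with (i) and (ii) the functor is a fully faithful embedding.
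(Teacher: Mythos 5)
Your overall architecture matches the paper's: well-definedness via a direct verification of (\ref{equation: Delta map in terms of Delta_k maps}), faithfulness from the behaviour on nondegenerate cells, and fullness by exploiting $\Delta_n(x)=x\otimes x$ together with the integral-domain hypothesis to constrain the image of a basis element. However, there is a genuine gap in the fullness argument. From $c_j^2=c_j$ and $c_jc_l=0$ ($j\neq l$) over an integral domain you may only conclude that \emph{at most} one $c_j$ equals $1$; the case $g(x)=0$ is perfectly consistent with these equations and does in fact occur. It occurs precisely when the sought-for globular map sends the nondegenerate cell $x$ to a degenerate cell of $Y$, since $\chains$ is a quotient by the image of $i_n$. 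Your parenthetical ``one checks separately, using $\varepsilon$, that the top cell is non-degenerate iff its image is'' does not address this: the augmentation only constrains degree $0$, and for $n>0$ a coalgebra map can and does kill basis elements.

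Handling this case is where most of the paper's work lies. When $g(x)=0$ one must \emph{define} $F(x)$ to be a specific degenerate cell --- the paper takes $F(x)=i_n\big(F(t_{n-1}x)\big)$ --- and then verify that this recursive choice is consistent, e.g.\ that $F(t_{n-1}x)=F(s_{n-1}x)$, which follows from the chain-map property ($g(x)=0$ forces $g(t_{n-1}x)=g(s_{n-1}x)$) plus a further induction when both of these vanish. Likewise, your verification that $F$ commutes with $t_k$ and $s_k$ by ``projecting onto the $[f(x)]\otimes(-)$ summands'' silently assumes $g(t_kx)\neq 0$ and $g(s_kx)\neq 0$; when one of these vanishes, the coalgebra-map identity only tells you that $t_kF(x)$ lies in the image of $i_k$, and you must run a descending regression (reducing $F(t_{n-1}x)=t_{n-1}F(x)$ to $F(t_{n-2}x)=t_{n-2}F(x)$, and so on), which terminates only because $\varepsilon g=\varepsilon$ guarantees $g$ is nonzero in degree $0$. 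Without these steps the proof of fullness is incomplete.
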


The proof of this theorem occupies Section 4.

\begin{remark}
	We can think of this statement as a non-linear globular form of the Dold-Kan Theorem. A conjecture, verified in the author's thesis \cite{medina2015thesis} for special cases, is that including the higher arity parts of an $E_\infty$-coalgebra structure on the chains of simplicial sets results in a similar non-linear (simplicial) Dold-Kan Theorem.
\end{remark}

\section{Group-like coalgebras and higher-dimensional categories}

\subsection{$\omega$-categories and the functor $\mu$}

In this subsection we recall the definition of $\omega$-categories, which are a globular model of strict higher-dimensional categories. We also review a natural construction associating an $\omega$-category to any differential graded $R$-module.  

\begin{definition} \label{definition: omega-category}
	An \textbf{$\omega$-category} is a globular set $X$ together with maps
	\begin{equation*}
	\circ_m : X_n \times_{X_m} X_n \to X_m
	\end{equation*}
	where
	\begin{equation*}
	X_n \times_{X_m} X_n = \big\{ (y,x) \in X \times X\ |\ s(y) = t(x) \big\}
	\end{equation*}
	satisfying relations of associativity, unitality and interchange. For the complete list of relations we refer the reader to Definition 1.4.8 in \cite{leinster2004higher}.	
	When $t_m(x) = s_m(y) = z$ we write $y \circ_z x$ for $y \circ_m x$. 
\end{definition}

The next definition appears in \cite{steiner2004omega} where is partially credited to \cite{brown2003cubical} and \cite{street1991parity}.
\begin{definition} [Street, Brown-Higgins, Steiner] \label{definition: mu functor}
	The functor
	\begin{equation*}
	\mu : \mathrm{Ch}_R \to \oCat 
	\end{equation*}
	is defined as follows: for $C$ a differential graded $R$-module let $\mu(C)$ be the $R$-submodule of the infinite product of $C$ with itself generated by all sequences
	\begin{equation*}
	c = (c_0^-, c_0 ^+, c_1^-, c_1^+, \dots)
	\end{equation*}  
	satisfying
	\begin{enumerate}[i)]
		\item $c_n^-, c_n^+ \in C_n,$
		\item $c_n^-, c_n^+ = 0 \text{ for } n >> 0,$
		\item $\partial c_{n+1}^- = \partial c_{n+1}^+ = c_n^+ - c_n^-.$
	\end{enumerate} 
	We can make this $R$-module into a globular set by defining
	\begin{equation*}
	\mu(C)_n = \{c \in \mu(C)\ :\ \forall k > n,\ c^-_k = c^+_k = 0 \}
	\end{equation*}
	and
	\begin{equation*}
	\begin{split}
	s_k(c) = &\ (c_0^-, c_0 ^+, \dots, c_{k-1}^-, c_{k-1}^+, c^-_k, c_k^-, 0, 0, \dots) \\
	t_k(c) = &\ (c_0^-, c_0 ^+, \dots, c_{k-1}^-, c_{k-1}^+, c^+_k, c_k^+, 0, 0, \dots) \\
	i_k(c) = &\ c.
	\end{split}
	\end{equation*}
	We can make this globular set into an $\omega$-category by defining 
	\begin{equation*}
	\begin{split}
	b\ \circ_c\ a =\ & b + a - c \\
	=\ & \big(b_0^- + a_0^- - c_0^-,\ b_0^+ + a_0^+ - c_0^+, \dotso \big).
	\end{split}
	\end{equation*}
\end{definition}

\subsection{Group-like coalgebras and the functor $\xi$}

In this subsection we define group-like elements in counital cosymmetric $R$-coalgebras and consider $\coAlg_{R}^{gl}$, the full subcategory of counital cosymmetric $R$-coalgebras admitting a basis of group-like elements. We then introduce a functor from $\coAlg_{R}^{gl}$ to $\oCat$ using the notion of atom associated to a group-like element.

\begin{definition} \label{definition: group-like element}
	Let $\big( C, \Delta , \varepsilon\big)$ be a counital cosymmetric coalgebra. We call $c \in C_n$ a \textbf{group-like element} if for any integer $k$ we have	
	\begin{equation*}
	\Delta_k (c) \in C_{\leq n} \otimes C_{\leq n}
	\end{equation*}
	\begin{equation*}
	\Delta_n(c) = c \otimes c
	\end{equation*} 
	and, when $n = 0$,
	\begin{equation*}
	\varepsilon(c) = 1.
	\end{equation*}  
	
	We say that $C$ is \textbf{group-like} if it admits a basis of group-like elements and denote the full subcategory of group-like counital cosymmetric $R$-coalgebras as $\coAlg^{gl}_R$.
\end{definition}

A consequence of the following lemma applied to the identity map is that if a counital cosymmetric coalgebra admits a basis of group-like elements, then that basis is unique. 

\begin{lemma} \label{lemma: coalgebra maps take group-like to group-like or 0}
	Let $R$ be an integral domain. If $f : R[A] \to R[B]$ is a coalgebra map between counital cosymmetric $R$-coalgebras with bases of group-like elements $A$ and $B$. Then, for any $a \in A$ either $f(a) = 0$ or there exists $b \in B$ such that $f(a) = b$.  
\end{lemma}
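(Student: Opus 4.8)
The plan is to track what the coalgebra map condition
\[
\Delta \circ (\id \otimes f) = (f \otimes f) \circ \Delta'
\]
forces at the top degree of a group-like element. Fix $a \in A$ with $a \in C'_n$ and write $f(a) = \sum_{b \in B} \lambda_b\, b$ with $\lambda_b \in R$. Since each $b \in B$ is group-like, $f(a) \in C_{\le n}\otimes$-considerations already tell us that only $b$'s of degree $\le n$ can appear; but the real leverage comes from applying both sides of the coalgebra identity to $e_n \otimes a$, where $e_n \in W_n$ is the chosen free generator, i.e.\ from comparing $\Delta_n$ on the two sides. On the source side, $\Delta'_n(a) = a \otimes a$ because $a$ is group-like. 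On the target side we get $\Delta_n\bigl(f(a)\bigr) = \sum_{b} \lambda_b\, \Delta_n(b) + (\text{terms from } \Delta_k(b) \text{ with } k<n)$. Here I would use that a degree-$m$ group-like element $b$ with $m<n$ has $\Delta_n(b)=0$ (it lies in $C_{\le m}\otimes C_{\le m}$, and the $\Delta_n$ component vanishes — this needs a short argument from the definition of group-like, but it is immediate since $\Delta_k(b)=0$ for $k>m$ is forced by the counitality relations together with $\Delta_m(b)=b\otimes b$), so only the degree-$n$ group-like elements $b$ contribute, each with $\Delta_n(b) = b\otimes b$. Therefore the identity becomes
\[
\Bigl(\sum_{b \in B_n}\lambda_b\, b\Bigr) \otimes \Bigl(\sum_{b' \in B_n}\lambda_{b'}\, b'\Bigr) \;=\; \sum_{b \in B_n} \lambda_b\, (b \otimes b),
\]
where $B_n$ denotes the group-like basis elements of degree $n$.

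The second step is to extract the scalar identity from this. Because $B$ is a basis, $\{b \otimes b' : b,b' \in B_n\}$ is linearly independent in $C_n \otimes C_n$, so comparing coefficients gives $\lambda_b \lambda_{b'} = 0$ whenever $b \ne b'$ and $\lambda_b^2 = \lambda_b$ for every $b$. Now I invoke the hypothesis that $R$ is an integral domain: $\lambda_b^2 = \lambda_b$ forces $\lambda_b \in \{0,1\}$, and $\lambda_b \lambda_{b'} = 0$ for $b \ne b'$ forces at most one $\lambda_b$ to be nonzero. Hence either all $\lambda_b = 0$, giving $f(a) = 0$, or exactly one equals $1$, giving $f(a) = b$ for a unique $b \in B_n$.

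I should still confirm that no lower-degree basis elements sneak into $f(a)$: a priori $f(a)$ could be $\sum_{b\in B_n}\lambda_b b + (\text{lower-degree terms})$. This is handled by the augmentation/counitality compatibility together with the grading. Since $f$ is a chain map of degree $0$ and $a$ is homogeneous of degree $n$, $f(a)$ is homogeneous of degree $n$, so only degree-$n$ basis elements can occur at all; the lower-degree terms are simply not there. (If the ambient convention allowed inhomogeneous combinations one would instead split by degree and note that $\Delta_n$ only sees the top piece, but in $\mathrm{Ch}_R$ everything is graded and this is automatic.)

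The main obstacle is the small lemma used in the first step — that a group-like element of degree $m$ has $\Delta_k = 0$ for all $k > m$, so that degree-$m$ group-likes with $m<n$ contribute nothing to $\Delta_n\bigl(f(a)\bigr)$. This follows by downward induction on $k$ using relation~(\ref{equation: Delta map in terms of Delta_k maps}): from $\Delta_k(b) \in C_{\le m}\otimes C_{\le m}$ and the fact that $\Delta$ is a chain map landing in degree-appropriate pieces, any component of $\Delta_k(b)$ in total degree $> m$ must vanish, and for $k>m$ the only components are in degree exactly $k$; a clean way to see it is to pair with the counit, using the commuting triangles $(1\otimes\varepsilon)\Delta = \varepsilon_W\otimes\id = (\varepsilon\otimes 1)\Delta$, which pins down $\Delta_k$ on the top degree and propagates downward. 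Once this bookkeeping is in place, the rest is the purely formal integral-domain argument above, and the lemma — hence uniqueness of the group-like basis, obtained by feeding in $f = \id$ — follows.
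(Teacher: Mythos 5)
Your proposal is correct and follows essentially the same route as the paper: write $f(a)=\sum_i\beta_i\,b_i$ with $b_i\in B_n$, apply $\Delta_n$ to both sides of the coalgebra identity to obtain $\sum_i\beta_i\,b_i\otimes b_i=\sum_{i,j}\beta_i\beta_j\,b_i\otimes b_j$, and use the integral-domain hypothesis to conclude that at most one $\beta_i$ is nonzero and equals $1$. The extra bookkeeping you supply about lower-degree basis elements is dispatched in the paper simply by noting that $f$ preserves degree, so only degree-$n$ group-like elements can occur in $f(a)$.
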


\begin{proof}
	For $a \in A_n$ there is a collection of elements $b_i \in B_n$ and coefficients $\beta_i \in R$ such that
	\begin{equation} \label{equation: pancito}
	f(a) = \sum_i \beta_{i}\,b_i.
	\end{equation}
	Applying $\Delta_{n}$ to (\ref{equation: pancito}) gives
	\begin{equation*}
	\sum_i \beta_{i}\, b_{i} \otimes b_{i} = \Delta_n f(a) = (f \otimes f) \Delta_n (a)  = \sum_{i,\,j} \beta_{i}\, \beta_{j}\ b_i \otimes b_j.
	\end{equation*}
	The equations $0 = \beta_{i}\, \beta_{j}$ for $i \neq j$ together with $\beta_{i} = \beta_{i}^2$ imply, since $R$ is an integral domain, that each coefficient $\beta_{i}$ equals $0$ except possibly one of them that must \mbox{equal $1$}.
\end{proof}

\begin{example} 
	Steenrod cup-$i$ coalgebras as well as globular $R$-coalgebras are group-like.
\end{example}

\begin{definition}
	Let $C$ be a differential graded $R$-module with a basis $B$. For $b \in B$ let $\pi_b : C \to R$ be the $R$-linear map sending $b$ to $1$ and the other basis elements to $0$. Define the maps
	\begin{equation*}
	\pi_b^+, \, \pi_b^- : C \otimes C \to C
	\end{equation*} 
	by
	\begin{equation*}
	\pi_b^+ = \id \otimes \pi_b \qquad \text{and} \qquad \pi_b^- = \pi_b^+ T
	\end{equation*}
	where $T$ is the transposition of factors.
\end{definition}

We make a note of the following straightforward observation for later use:
\begin{lemma} \label{lemma: boundary and projection for specific degrees}
	Let $C$ be a differential graded $R$-module with a basis $B$. If $b \in B_n$ and $\eta \in \{+,-\}$ then
	\begin{equation*}
	\partial \pi^\eta_b = \pi^\eta_b \partial
	\end{equation*} 
	on $C_{\leq n} \otimes C_{\leq n}$ and
	\begin{equation*}
	\pi_b^\eta = 0 
	\end{equation*}
	on $C_{\leq n-1} \otimes C_{\leq n-1}$.
\end{lemma}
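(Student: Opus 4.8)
The plan is to verify both displayed statements by a direct computation, reducing everything to how $\partial$ and the transposition $T$ act on tensors of basis elements of bounded degree. For the first claim, I would start from the definition $\pi_b^+ = \id \otimes \pi_b$ and compute $\partial \pi_b^+$ on a basis tensor $x \otimes y$ with $x \in C_{\leq n}$ and $y \in C_{\leq n}$. Using the Leibniz rule in $C \otimes C$, namely $\partial(x \otimes y) = \partial x \otimes y + (-1)^{|x|} x \otimes \partial y$, the point is that $\pi_b \colon C \to R$ lands in degree zero, so $\pi_b(y) \neq 0$ forces $|y| = 0$ and hence $\partial y = 0$; conversely $\pi_b(\partial y)$ can only be nonzero if $\partial y$ has a component on $b$, which again forces $|b| = 0$. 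Chasing the two expressions $\partial \pi_b^+(x \otimes y)$ and $\pi_b^+ \partial(x \otimes y)$ and matching signs shows they agree. The case $\eta = -$ then follows by precomposing with $T$ and using that $T$ is a chain map, i.e. $\partial T = T \partial$ on $C \otimes C$, together with $\pi_b^- = \pi_b^+ T$.

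For the second claim I would argue that on $C_{\leq n-1} \otimes C_{\leq n-1}$ any basis tensor $x \otimes y$ has $|y| \leq n-1 < n = |b|$, so $\pi_b(y) = 0$ since $\pi_b$ is nonzero only on the single basis element $b$ of degree $n$; hence $\pi_b^+ = \id \otimes \pi_b$ vanishes identically there, and $\pi_b^- = \pi_b^+ T$ vanishes as well because $T$ preserves the subspace $C_{\leq n-1} \otimes C_{\leq n-1}$.

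I do not expect any serious obstacle here; this is the ``straightforward observation'' the statement advertises. The only place that needs a moment of care is the sign bookkeeping in the first identity: one must confirm that the Koszul sign $(-1)^{|x|}$ appearing in the Leibniz rule does not spoil the commutation, and the resolution is precisely that the only surviving terms on either side occur when the relevant factor has degree $0$, making every offending sign equal to $+1$. Keeping track of which factor $\pi_b$ hits (the second one for $\pi_b^+$, the first for $\pi_b^-$) and invoking $\partial T = T\partial$ to transfer the $+$ case to the $-$ case is all that is required.
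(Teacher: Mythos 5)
The second claim is handled correctly, and the overall skeleton of your argument for the first claim (Leibniz rule, show the cross term dies, then transfer from $\eta=+$ to $\eta=-$ via $\pi_b^-=\pi_b^+T$ and $\partial T=T\partial$) is the right one; the paper itself states this lemma without proof, so the only question is whether your justification holds up. The key step does not. You assert that ``$\pi_b$ lands in degree zero, so $\pi_b(y)\neq 0$ forces $|y|=0$'' and that $\pi_b(\partial y)\neq 0$ ``forces $|b|=0$.'' This is false for $n>0$: $\pi_b$ is the coefficient-of-$b$ functional, supported on $C_n$ where $b$ lives, not on $C_0$; it is merely $R$-linear, not a chain map to $R$ concentrated in degree $0$. (Your second paragraph contradicts this, correctly using that $\pi_b$ is nonzero only on the degree-$n$ basis element $b$.) Consequently your stated resolution of the sign issue --- that the only surviving terms occur when the relevant factor has degree $0$, making the Koszul sign $+1$ --- is not the actual mechanism.

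The correct argument runs as follows: for homogeneous $x\otimes y$ with $|x|,|y|\leq n$,
\[
\partial\pi_b^+(x\otimes y)=\pi_b(y)\,\partial x,
\qquad
\pi_b^+\partial(x\otimes y)=\pi_b(y)\,\partial x+(-1)^{|x|}\pi_b(\partial y)\,x,
\]
and the cross term vanishes because $\partial y\in C_{\leq n-1}$ while $\pi_b$ kills everything of degree less than $n$ --- i.e.\ exactly the lemma's second claim applied one degree down. This is the only place the hypothesis $y\in C_{\leq n}$ enters, and it is essential: if $|y|=n+1$ and $\partial y$ has a nonzero $b$-component, then $\partial\pi_b^+(x\otimes y)=0$ while $\pi_b^+\partial(x\otimes y)=(-1)^{|x|}\pi_b(\partial y)\,x\neq 0$, so the commutation genuinely fails off $C_{\leq n}\otimes C_{\leq n}$. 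Your proof of the first claim never invokes that hypothesis, which is the tell-tale sign that the degree bookkeeping has gone wrong. Replace ``degree $0$'' by ``degree $n$'' throughout the first paragraph and justify the vanishing of $\pi_b(\partial y)$ by the $C_{\leq n-1}$ statement, and the argument is complete.
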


\begin{definition} \label{definition: coalgebra atom}
	Let $(C, \Delta, \varepsilon)$ be a counital cosymmetric coalgebra. For every group-like element $b \in C$ define its \textbf{atom} as
	\begin{equation*}
	\langle b \rangle = \big( \langle b \rangle_0^-, \langle b \rangle_0^+, \langle b \rangle_1^-, \langle b \rangle_1^+,\, \dots \big)
	\end{equation*}
	with 
	\begin{equation*}
	\langle b \rangle_k^\eta = 
	\begin{cases} 
	(-1)^k\, \pi^-_b \Delta_k b & \eta = - \\
	\pi^+_b \Delta_k b & \eta = +.
	\end{cases}
	\end{equation*}
\end{definition}

\begin{lemma}
	Let $C$ be a counital cosymmetric coalgebra. For any group-like element $b \in C$ the atom $\langle b \rangle$ is in $\mu(C)$. 
\end{lemma}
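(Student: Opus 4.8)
The plan is to verify that $\langle b \rangle$ satisfies the three defining conditions of $\mu(C)$ from Definition \ref{definition: mu functor}. Condition i) is immediate from the definition of the atom: $\langle b \rangle_k^\eta$ is the image under $\pi_b^\eta : C \otimes C \to C$ of $\Delta_k b$, and since $b$ is group-like, $\Delta_k b \in C_{\leq n} \otimes C_{\leq n}$; applying $\pi_b^\eta = \id \otimes \pi_b$ (or its precomposition with $T$) lands in $C_{\leq n}$, but we need the sharper statement that $\langle b \rangle_k^\eta \in C_k$. This follows because $\pi_b$ vanishes on all basis elements other than $b$, and more importantly because of Equation (\ref{equation: Delta map in terms of Delta_k maps}) together with the grading; I would extract the degree-$k$ component and argue that $\pi_b$ picks out precisely the coefficient of $b$ in the appropriate tensor slot, forcing the other factor to have degree $k$.

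For condition ii), I would use the hypothesis that $b \in C_n$ is group-like, so $\Delta_k b \in C_{\leq n} \otimes C_{\leq n}$ for all $k$, and in particular $\Delta_k b = 0$ for $k > n$ is \emph{not} automatic — rather, we need to see that $\pi_b^\eta \Delta_k b = 0$ for $k > n$. Here I would invoke that $\Delta_k b$ has total degree $n + k$ (since $\Delta_k$ raises degree by $k$), and lies in $C_{\leq n} \otimes C_{\leq n}$; if the degree-$k$-in-the-relevant-slot component is to be nonzero while the total is $n+k$, the complementary factor has degree $n$, which is fine for $k \leq n$ but for $k > n$ forces a factor of degree $> n$, contradicting group-likeness. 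Thus $\langle b \rangle_k^\eta = 0$ for $k > n$, giving ii). This also shows $\langle b \rangle_n^\eta = \pi_b^\eta(b \otimes b) = b$ using $\Delta_n b = b \otimes b$.

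The main work is condition iii), the relation $\partial \langle b \rangle_{k+1}^- = \partial \langle b \rangle_{k+1}^+ = \langle b \rangle_k^+ - \langle b \rangle_k^-$. I would start from the structural identity (\ref{equation: Delta map in terms of Delta_k maps}),
\begin{equation*}
\partial \Delta_{k+1} - (-1)^{k+1} \Delta_{k+1}\partial = \big(1 + (-1)^{k+1} T\big)\Delta_k,
\end{equation*}
apply $\pi_b^\eta$ to both sides, and use Lemma \ref{lemma: boundary and projection for specific degrees} to commute $\partial$ past $\pi_b^\eta$ on $C_{\leq n}\otimes C_{\leq n}$. The term $\Delta_{k+1}\partial b$ vanishes on evaluating at $b$ because $\partial b$ is a combination of lower-degree basis elements and... no — rather, I would observe that $\pi_b^\eta$ applied to $\Delta_{k+1}(\partial b)$ vanishes since $\partial b \in C_{\leq n-1}$ and by the vanishing clause of Lemma \ref{lemma: boundary and projection for specific degrees} (the relevant tensor components lie in $C_{\leq n-1}\otimes C_{\leq n-1}$). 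Then I would carefully compute $\pi_b^+(1 + (-1)^{k+1}T)\Delta_k b$ and $\pi_b^-(1 + (-1)^{k+1}T)\Delta_k b$, using $\pi_b^- = \pi_b^+ T$ and $T^2 = \id$, and track the signs $(-1)^k$ built into the definition of $\langle b\rangle_k^-$; this should collapse to exactly $\langle b\rangle_k^+ - \langle b\rangle_k^-$ on both sides. The main obstacle is the sign bookkeeping: reconciling the sign $(-1)^{k+1}$ in the structural relation, the sign $(-1)^k$ in the definition of the negative atom component, and the Koszul sign in $T$, so that the two expressions $\partial\langle b\rangle_{k+1}^-$ and $\partial\langle b\rangle_{k+1}^+$ genuinely agree and equal the prescribed difference. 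I would do this by evaluating the $\eta = +$ and $\eta = -$ cases separately and checking the signs match term by term.
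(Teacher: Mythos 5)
Your proposal is correct and follows essentially the same route as the paper's proof: conditions i) and ii) via degree considerations and group-likeness, and condition iii) by applying $\pi_b^\eta$ to the coherence relation (\ref{equation: Delta map in terms of Delta_k maps}) at index $k+1$ and invoking Lemma \ref{lemma: boundary and projection for specific degrees} both to commute $\partial$ past $\pi_b^\eta$ and to kill the $\Delta_{k+1}\partial b$ term. The only piece you defer is the final sign check; carrying it out (keeping track of the Koszul sign in $T$ and the factor $(-1)^k$ built into $\langle b \rangle_k^-$) does collapse both the $\eta = +$ and $\eta = -$ computations to $\langle b \rangle_k^+ - \langle b \rangle_k^-$, exactly as in the paper.
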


\begin{proof}
	We need to prove that for any group-like element $b$ of degree $n$ the sequence $\langle b \rangle$ satisfies conditions i), ii), and iii) in Definition \ref{definition: mu functor}. Since the first two conditions are immediate, we are left with showing that for any non-negative integer $k$ 
	\begin{equation} \label{equation: tostadas}
	\partial \langle b \rangle_{k+1}^+ = \partial \langle b \rangle_{k+1}^- = \langle b \rangle_{k}^+ - \langle b \rangle^-_{k.}
	\end{equation}
	For $k > n$ and $\eta \in \{+ , -\}$ we have $\langle b \rangle_{k}^{\eta} = 0$ so (\ref{equation: tostadas}) holds. 
	For $k = n$ we notice that $\langle b \rangle_{k}^+ = \langle b \rangle_{k}^- = b$ and (\ref{equation: tostadas}) follows. For $k < n $, using Lemma \ref{lemma: boundary and projection for specific degrees}, we have 
	\begin{equation*}
	\begin{split}
	\langle b \rangle_{k}^+ - \langle b \rangle_{k}^- =\ & \pi_b^+ \Delta_{k}b - (-1)^{k} \pi_b^- \Delta_{k}b = \pi_b^+ \big( 1+(-1)^{k+1} T \big ) \Delta_{k} b \\ =\ & 
	\pi^+_b \big(\partial \Delta_{k+1} b - (-1)^k \Delta_{k+1} \partial b \big) = \partial \pi^+_b \Delta_{k+1} b \\ =\ &  
	\partial \langle b \rangle_{k+1}^+
	\end{split}
	\end{equation*}
	and 
	\begin{equation*}
	\begin{split}
	\langle b \rangle_{k}^+ - \langle b \rangle_{k}^- =\ & \pi_b^+ \Delta_{k}b - (-1)^{k} \pi_b^- \Delta_{k}b = \pi_b^- \big( T + (-1)^{k+1} \big) \Delta_{k} b \\ =\ & 
	(-1)^{k+1} \pi^-_b \big( \partial \Delta_{k+1} b - (-1)^k \Delta_{k+1} \partial b \big) \\ =\ &  (-1)^{k+1} \partial \pi^-_b (\Delta_{k+1} b) = \partial \langle b \rangle_{k+1}^-
	\end{split}
	\end{equation*}
	as desired.
\end{proof}

\begin{lemma} \label{lemma: the functor xi is well defined}
	Let $R$ be an integral domain. The assignment sending a group-like counital cosymmetric $R$-coalgebra $C$ to the sub-$\omega$-category of $\mu(C)$ generated by its atoms is functorial.
\end{lemma}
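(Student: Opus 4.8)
The plan is to realize $\xi$ on morphisms as a restriction of the functor $\mu$ and then reduce the statement to a claim about the images of atoms. On objects there is nothing to prove beyond recalling that a sub-$\omega$-category of an $\omega$-category is again an $\omega$-category: $\mu(C)$ is an $\omega$-category by Definition \ref{definition: mu functor}, every atom of $C$ lies in $\mu(C)$ by the previous lemma, so the sub-$\omega$-category $\xi(C)$ generated by the atoms is a well-defined object of $\oCat$. Given a coalgebra map $f : R[A] \to R[B]$, forgetting to the underlying chain map and applying $\mu$ produces an $\omega$-functor $\mu(f) : \mu(R[A]) \to \mu(R[B])$ acting coordinatewise by $f$. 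I would show that $\mu(f)$ carries $\xi(R[A])$ into $\xi(R[B])$; granting this, $\xi(f)$ is the induced map $\xi(R[A]) \to \xi(R[B])$, and the relations $\xi(\id) = \id$ and $\xi(g \circ f) = \xi(g)\circ\xi(f)$ follow immediately from the corresponding relations for $\mu$. Since $\mu(f)$ is an $\omega$-functor and $\xi(R[A])$ is generated as an $\omega$-category by the atoms, $\mu(f)\big(\xi(R[A])\big)$ is generated by $\{\mu(f)\langle a\rangle : a \in A\}$, so it suffices to prove that $\mu(f)\langle a\rangle \in \xi(R[B])$ for every $a \in A$.

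Fix $a \in A$ of degree $n$, so the coordinates of $\mu(f)\langle a\rangle$ are the elements $f(\langle a\rangle^\eta_k)$. By Lemma \ref{lemma: coalgebra maps take group-like to group-like or 0}, either $f(a) = b$ for some group-like basis element $b$ of $R[B]$, or $f(a) = 0$. If $f(a) = b$, I would prove $\mu(f)\langle a\rangle = \langle b\rangle$, which lies in $\xi(R[B])$ by construction: using that $f$ intertwines the coproducts one has $f(\langle a\rangle^+_k) = (f\otimes\pi_a)\Delta_k a$ and $\langle b\rangle^+_k = (\id\otimes\pi_b)(f\otimes f)\Delta_k a = (f\otimes(\pi_b\circ f))\Delta_k a$, and it remains to see that $\pi_a$ and $\pi_b\circ f$ have the same effect on $\Delta_k a$, equivalently that no group-like basis element of degree $n$ other than $a$ occurs in a right-hand tensor factor of $\Delta_k a$; the case $\eta = -$ is identical. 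This last point I would try to establish for any group-like $a$ by downward induction on $k$: it is immediate from $\Delta_n a = a\otimes a$ when $k = n$, and the step should use relation (\ref{equation: Delta map in terms of Delta_k maps}) together with the fact that $\Delta_j$ kills every element of degree below $j$ (so that the term $\Delta_{k+1}\partial a$, all of whose tensor factors have degree $< n$, cannot contribute the degree-$n$ part). If instead $f(a) = 0$, then $\mu(f)\langle a\rangle$ has vanishing coordinates in degrees $\geq n$, hence lies in $\mu(R[B])_{n-1}$; moreover, since $f$ is a chain map, $\partial f(a) = 0$ forces $f(\langle a\rangle^+_{n-1}) = f(\langle a\rangle^-_{n-1})$, whence $\mu(f)\langle a\rangle = \mu(f)\big(t_{n-1}\langle a\rangle\big)$, the image of the $(n-1)$-target of the atom. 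As $t_{n-1}\langle a\rangle$ is a cell of $\xi(R[A])$ obtained from atoms of dimension $< n$, an induction on dimension reduces this case to the previous one.

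The step I expect to be the main obstacle is exactly the verification, in the case $f(a) = b \neq 0$, that $\mu(f)$ sends the atom of $a$ to the atom of $b$. Because $f$ need not be injective on the distinguished bases, $\pi_b\circ f$ could a priori detect basis elements other than $a$, and the content of the argument is the analysis of the right-hand tensor factors of the coproducts $\Delta_k a$ — via group-likeness and relation (\ref{equation: Delta map in terms of Delta_k maps}) — that rules this out; this is also where one must be careful about whether the downward induction just sketched actually closes, or whether the coalgebra-map hypothesis and the integral-domain assumption need to be used more forcefully. Everything else amounts to unwinding the definition of $\mu$ and invoking the identities already established in this section.
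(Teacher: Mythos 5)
Your overall strategy --- realize $\xi(f)$ as the restriction of the $\omega$-functor $\mu(f)$ and reduce everything to showing $\mu(f)\langle a\rangle \in \xi(R[B])$ for each atom, splitting into the cases $f(a)=b$ and $f(a)=0$ supplied by Lemma \ref{lemma: coalgebra maps take group-like to group-like or 0} --- is exactly what the paper intends: its own proof is the single sentence that functoriality ``follows from'' that lemma, so you are expanding the argument rather than diverging from it, and your identification of where the real content lies is accurate. However, two of the steps you rely on do not close as written. First, in the case $f(a)=b$ you reduce to showing that $\pi_a$ and $\pi_b\circ f$ agree on the right-hand tensor factors of $\Delta_k a$, i.e.\ that no basis element of degree $n$ other than $a$ occurs there (note that what must be excluded is not an arbitrary degree-$n$ basis element but one mapping to $b$ under $f$, since by Lemma \ref{lemma: coalgebra maps take group-like to group-like or 0} we have $\pi_b\circ f=\sum_{f(a')=b}\pi_{a'}$). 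Your proposed downward induction on $k$ via relation (\ref{equation: Delta map in terms of Delta_k maps}) does not isolate the relevant component: writing $\Delta_k a$ in bidegrees, the term $\big(1+(-1)^{k+1}T\big)\Delta_k a$ mixes the $(k,n)$ and $(n,k)$ components, so control of the degree-$n$ right-hand factors of $\Delta_{k+1}a$ only constrains a symmetrized combination of the two components of $\Delta_k a$, not the $(k,n)$ component itself; the induction therefore does not propagate without further input.

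Second, in the case $f(a)=0$ the identity $\mu(f)\langle a\rangle=\mu(f)\big(t_{n-1}\langle a\rangle\big)$ is correct (it follows from $f(\partial a)=0$ exactly as you say), but the ensuing induction on dimension needs $t_{n-1}\langle a\rangle$ to lie in the sub-$\omega$-category generated by the atoms of dimension strictly less than $n$. Being the target of the atom $\langle a\rangle$ only places it in the sub-$\omega$-category generated by \emph{all} atoms, which is circular for your purposes. The assertion that the codimension-one target of an atom decomposes as a composite of lower-dimensional atoms is a genuine ``atomicity''/generation property --- it is what Steiner's loop-freeness hypotheses guarantee in his setting --- and it is not available here for free from group-likeness alone. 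Both gaps are ones you flag yourself, which is to your credit, but as it stands the proposal establishes the framework of the proof without completing its two essential verifications; to be fair, the paper's own one-line proof does not address these points either.
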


\begin{proof}
	The statement follows from the fact, proven in Lemma \ref{lemma: coalgebra maps take group-like to group-like or 0}, that when $R$ is an integral domain a coalgebra map between group-like $R$-coalgebras sends group-like elements to either group-like elements or \mbox{to $0$}. 
\end{proof}

\begin{definition}
	Let 
	\begin{equation*}
	\xi : \coAlg_R^{gl} \to \oCat
	\end{equation*}
	be the functor described in Lemma \ref{lemma: the functor xi is well defined}.
\end{definition}

\subsection{Street's orientals}

In this subsection we state the second main result of this note: the functor $\xi$ sends the Steenrod coalgebra of a standard simplex to the free $\omega$-category generated by that simplex. 

Historically, Roberts \cite{roberts1977mathematical} pioneered the idea of using higher-dimensional categories as the coefficient objects for non-abelian cohomology. A key ingredient for this enterprise is the construction of a nerve functor from $\omega$-categories to simplicial sets. Such a functor $N$ can be obtained from the construction of a natural cosimplicial $\omega$-category
\begin{equation*}
\begin{split}
\mathcal{O} :\ \bm{\Delta}\hspace*{1pt} &\to \oCat \\
[n] &\mapsto \,  \mathcal{O}_n
\end{split}
\end{equation*}
by setting
\begin{equation*}
N(\mathcal{C})_n = \Hom_{\oCat} \big( \mathcal{O}_n,\, \mathcal{C} \big).
\end{equation*}
This was accomplished by Street in \cite{street1987algebra} where he says the following about the $\omega$-categories $\mathcal{O}_n$\,: ``[t]hese objects seem to be fundamental structures of nature so I decided they should have a short descriptive name. I settled on oriental."

We will not use the original definition of Street but an equivalent one given by Steiner in \cite{steiner2004omega} and further explored in \cite{steiner2007orientals}. It is presented as Definition \ref{definition: oriental} after a review of Steiner's theory of augmented directed complexes. 

We are ready to state the second main result of this work. 
\begin{theorem} \label{theorem steenrod and street}
	Let $\big( \chains(\bm{\Delta}^n; \F), \Delta, \varepsilon \big)$ be the Steenrod cup-$i$ coalgebra associated to the $n$-th representable simplicial set $\bm{\Delta}^n$. Then,
	\begin{equation*}
	\xi \big( \chains(\bm{\Delta}^n; \F), \Delta, \varepsilon \big) = \mathcal{O}_n.
	\end{equation*} 
\end{theorem}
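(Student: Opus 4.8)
The plan is to compute both sides of the asserted equality explicitly as sub-$\omega$-categories of $\mu\big(\chains(\bm{\Delta}^n;\F)\big)$ and to match their generating atoms. First I would unwind the left-hand side: for each face $d_V\,[n]$ of the standard simplex $\bm{\Delta}^n$ (indexed by a subset $V \subseteq \{0,\dots,n\}$, which by the example cited after Lemma \ref{lemma: coalgebra maps take group-like to group-like or 0} is a group-like element of $\chains(\bm{\Delta}^n;\F)$), I compute its atom $\langle d_V[n]\rangle$ using Definition \ref{definition: coalgebra atom}. Concretely, $\langle d_V[n]\rangle^\eta_k = \pi^{\pm}_{d_V[n]}\Delta_k(d_V[n])$ up to sign, so I need to identify, inside Steenrod's formula (\ref{equation: steenrod diagonal}) for $\Delta_k$, the terms $d_{U^-}x \otimes d_{U^+}x$ whose second (resp.\ first) tensor factor equals the fixed basis element $d_V[n]$. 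This is a purely combinatorial bookkeeping exercise on the interval-partition structure of the sets $U \in P\binom{n}{k}$ and their splitting into $U^-, U^+$; the output should be a sum of faces of $\bm{\Delta}^n$ with signs, expressed in the language of Steiner's augmented directed complex of the simplex.

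Second, I would recall (from Steiner's description in \cite{steiner2004omega}, reviewed in the section containing Definition \ref{definition: oriental}) the explicit atoms generating $\mathcal{O}_n$: these are indexed by the faces of $\bm{\Delta}^n$, and the $k$-dimensional negative/positive parts $\langle -\rangle^-_k, \langle -\rangle^+_k$ of the atom of a face are given by the ``odd'' and ``even'' sub-faces in Street's/Steiner's sense — essentially the same $U^-/U^+$ parity splitting that appears in Definition \ref{definition: steenrod coalgebra}. The key step is then to observe that these two combinatorial recipes coincide term-by-term: this is not a coincidence but reflects the known fact (implicit in \cite{medina2018axiomatic} and its antecedents) that Steenrod's cup-$i$ formula and Street's orientals are both governed by the same interval/staircase combinatorics on simplices. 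Having matched the generating atoms, I invoke Lemma \ref{lemma: the functor xi is well defined} and Definition \ref{definition: mu functor}: $\xi\big(\chains(\bm{\Delta}^n;\F)\big)$ is by definition the sub-$\omega$-category of $\mu$ generated by all atoms of group-like elements, and since the group-like elements are exactly the faces $d_V[n]$ (this uses that $R = \F$ is an integral domain and the uniqueness of the group-like basis following Lemma \ref{lemma: coalgebra maps take group-like to group-like or 0}), the generators match those of $\mathcal{O}_n$, giving equality of sub-$\omega$-categories of the common ambient object $\mu\big(\chains(\bm{\Delta}^n;\F)\big) = \mu$ of Steiner's chain complex of $\bm{\Delta}^n$.

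The main obstacle I anticipate is the sign bookkeeping in the first step: Steenrod's diagonal (\ref{equation: steenrod diagonal}) as written carries no explicit signs (it is stated over $\F$), but the atom construction in Definition \ref{definition: coalgebra atom} inserts a sign $(-1)^k$ on the negative part and the defining relation (\ref{equation: Delta map in terms of Delta_k maps}) of a cosymmetric coalgebra is sign-sensitive, while Steiner's orientals live naturally over $\Z$ with their own orientation conventions. Reconciling these — i.e.\ checking that the $\F$-reduction of Steiner's signed atoms agrees with the unsigned Steenrod-derived atoms, and that no parity of $k$ or $n$ spoils the correspondence — will require care, and is where I would spend most of the technical effort. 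A secondary subtlety is verifying the ``$\Delta_k(c) \in C_{\leq n}\otimes C_{\leq n}$'' and ``$\Delta_n(c) = c\otimes c$'' conditions of Definition \ref{definition: group-like element} hold for all faces (so that the atoms are well-defined elements of $\mu$), which for the Steenrod coalgebra follows because $d_{U^\pm}$ strictly decreases or preserves dimension and $P\binom{n}{0}$ has the single element giving $x \otimes x$; this part should be routine once the indexing is set up correctly.
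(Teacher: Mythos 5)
Your overall strategy is the same as the paper's: realize both sides as sub-$\omega$-categories of $\mu$ applied to the chain complex of the simplex, note (via Lemma \ref{lemma: coalgebra maps take group-like to group-like or 0} and the Example following it) that the group-like basis of the Steenrod coalgebra is exactly the set of nondegenerate faces, and reduce the theorem to matching generating atoms. Your closing remarks about the ring-independence of $\mu$ and about $\Delta_n(c)=c\otimes c$ are also fine. The problem is your ``key step.'' You propose to \emph{observe} that the atoms $\pi^\eta_\sigma\Delta_i\sigma$ read off from Steenrod's formula (\ref{equation: steenrod diagonal}) coincide term-by-term with the atoms of $\mathcal{O}_n$, on the grounds that both are governed by the same parity combinatorics and that this is ``a known fact.'' But the definition of $\mathcal{O}_n$ in force in this paper is Steiner's (Definition \ref{definition: oriental}), in which the atom components are given by the recursion (\ref{equation: steiner atom}), namely $\sigma_i^\pm=(\partial\sigma_{i+1}^\pm)^\pm$ — iterated positive/negative parts of boundaries — and \emph{not} by any explicit parity formula. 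The identity $\sigma^\eta_i=\pi^\eta_\sigma\Delta_i\sigma$ is therefore precisely the content of the theorem; it cannot be cited as known without either proving it or importing (with proof of equivalence) a closed-form description of Steiner's atoms that the paper does not assume.

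Concretely, what is missing is the inductive cancellation argument. The paper proceeds by induction on $r=m-i$: assuming $\sigma_i^-=\sum_{U\in P{m\choose r},\,U^-=\emptyset}d_{U^+}\sigma$, it expands $\partial\sigma_i^-=\sum_j(-1)^j d_j d_{U^+}\sigma$, uses the simplicial identities to commute each $d_j$ past $d_{u_1}\cdots d_{u_r}$, and pairs off terms so that everything cancels except the terms $d_{u_1}\cdots d_{u_r}d_{j+r}$ ($j$ odd) and $d_jd_{u_1}\cdots d_{u_r}$ ($j$ even); these reassemble into $\sum_{U^-=\emptyset}d_{U^+}\sigma-\sum_{U^+=\emptyset}d_{U^-}\sigma$ over $U\in P{m\choose r+1}$, which identifies $(\partial\sigma_i^-)^\pm$ and closes the recursion. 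Without this (or an equivalent computation) your proof does not go through. As a side remark, the sign issue you flag as the main obstacle is real but secondary: the cancellation computation shows all surviving coefficients equal $+1$, so the equality holds over $\Z$ via the canonical set lift $\F\to\Z$, exactly as the paper states around (\ref{equation: in simplex atoms equals steiner atoms}).
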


The proof of this theorem occupies subsection \ref{subsection: proof of theorem Steenrod and street}. 

\subsection{Steiner's augmented directed complexes} \label{subsection: Steiner's ADCs}

In this section we give an extremely abridged exposition of Steiner's rich theory of augmented directed complexes with the aim of proving Theorem \ref{theorem steenrod and street}. The original source is \cite{steiner2004omega}.

We refer to the objects of $\mathrm{Ch}_{\Z}$ simply as chain complexes.

Let $C$ be a chain complex together with a basis. We write $C^+$ for the submonoid containing all elements written as linear combinations of basis elements with only non-negative coefficients. We use the following notation for the induced canonical decomposition:
\begin{equation*}
c = c^+ - c^-
\end{equation*}
with $c^+$ and $c^-$ in $C^+$.

Let $(C, \varepsilon)$ be an augmented chain complex with a basis $B$. For $b \in B_n$ define recursively
\begin{equation} \label{equation: steiner atom}
b_i^+ = 
\begin{cases}
0 & i > n \\
b & i = n \\
(\partial b_{i+1}^+)^+ & i < n
\end{cases}
\qquad \text{and} \qquad
b_i^- = 
\begin{cases}
0 & i > n \\
b & i = n \\
(\partial b_{i+1}^-)^- & i < n.
\end{cases}
\end{equation}
The basis is said to be \textbf{unital} if $\varepsilon(b_0^+) = \varepsilon(b_0^-) = 1$ for every $b \in B$.

\begin{definition}
	A \textbf{strong augmented directed complex} or simply a \textbf{SADC} is an augmented chain complex $C$ with a unital basis such that the transitive closure of the reflexive relation $\leq$ defined by 
	\begin{equation*}
	c_1 \leq c_2
	\end{equation*}
	if and only if
	\begin{equation*}
	(\partial c_2)^- \! - c_1 \in C^+
	\end{equation*}
	or
	\begin{equation*}
	(\partial c_1)^+ \! - c_2 \in C^+
	\end{equation*}
	is anti-symmetric, i.e., it defines a partial order on $C$.
	
	A morphism between two SADCs is an augmented chain map $f : C_1 \to C_2$ such that
	\begin{equation*}
	f(C_1^+) \subset f(C_2^+).
	\end{equation*} 
\end{definition}

\begin{definition} \label{definition: steiner atom}
	Let $(C, B)$ be a SADC. For $b \in B$ define its \textbf{Steiner atom} to be
	\begin{equation*}
	\big(b_0^-, b_0^+, b_1^-, b_1^+, \dots \big) \in \mu(C).
	\end{equation*}
\end{definition}

Steiner showed that assigning to a SADC, let us call it $(C,B)$, the sub-$\omega$-category generated inside $\mu(C)$ by its Steiner atoms defines a full and faithful embedding 
\begin{equation*}
\nu : \mathrm{SADC} \to \oCat
\end{equation*}
independent of the commutative and unital ring $R$ used in the construction of $\mu$. We refer the reader to 5.6, 6.1, and 6.2 in \cite{steiner2004omega} for these statements. 

Additionally, Steiner gives the following definition of Street's orientals in 3.8 loc. cit.:

\begin{definition} [\cite{steiner2004omega}] \label{definition: oriental}
	Let $\bm{\Delta}^n$ denote the $n$-th representable simplicial set. The chain complex $\chains(\bm{\Delta}^n; \Z)$ together with the canonical basis
	\begin{equation*}
	B = \big\{ [m] \to [n]\, :\, \text{injective} \big\}
	\end{equation*}
	define a SADC and 
	\begin{equation*}
	\mathcal{O}_n = \nu \big( \chains(\bm{\Delta}^n; \Z),\ B \big).
	\end{equation*}
\end{definition}  

\subsection{Proof of Theorem \ref{theorem steenrod and street}} \label{subsection: proof of theorem Steenrod and street}

We will exhibit a bijection between the set of atoms of $\xi \big( \chains(\bm{\Delta}^n; \F), \Delta, \varepsilon \big)$ and of Steiner atoms of $\nu \big( \chains(\bm{\Delta}^n; \Z), B \big)$ which, since these are generators, will establish the theorem. 

We will verify that for every non-degenerate simplex $\sigma : [m] \to [n]$ and $\eta \in \{-,+\}$ we have    
\begin{equation} \label{equation: in simplex atoms equals steiner atoms}
\sigma^\eta_i = \pi_\sigma^\eta \Delta_i \sigma
\end{equation}
where this equality holds with $\Z$-coefficients using the canonical set lift $\F \to \Z$ with $0 \mapsto 0$ and $1 \mapsto 1$.

For $i > m$, both sides of (\ref{equation: in simplex atoms equals steiner atoms}) are equal to $0$.

For $i \leq m$, let $r = m-i$. Then, by (\ref{equation: steenrod diagonal}), we have
\begin{equation*}
\pi_\sigma^-\Delta_i \sigma = \sum_{\substack{U \in P{m \choose r} \\ U^- =\ \emptyset}} d_{U^+}\sigma
\qquad \text{and} \qquad
\pi_\sigma^+\Delta_i \sigma = \sum_{\substack{U \in P{m \choose r} \\ U^+ =\ \emptyset}} d_{U^-}\sigma.		
\end{equation*}
Then, in the case $r = 0$,  (\ref{equation: in simplex atoms equals steiner atoms}) holds because $\pi^-_\sigma \Delta_i \sigma = \sigma$ and $\pi^+_\sigma\Delta_i \sigma = \sigma$, so $\pi^-_\sigma \Delta_i \sigma = \sigma_i^-$ and $\pi^+_\sigma\Delta_i \sigma = \sigma_i^+$. Assuming the identity for $r$ we compute 
\begin{equation*}
\begin{split}
\partial \sigma_i^- = 
\sum_j (-1)^j d_j \sigma_i^- = 
\sum_j (-1)^j \sum_{\substack{U \in P{m \choose r} \\ U^- =\ \emptyset}} d_jd_{U^+}\sigma.
\end{split}
\end{equation*}
We will prove the identity for $r+1$ by rewriting the above identity as
\begin{equation*}
\partial \sigma_i^- = \sum_{\substack{U \in P{m \choose r+1} \\ U^- =\ \emptyset}} d_{U^+}\sigma 
\quad - \quad
\sum_{\substack{U \in P{m \choose r+1} \\ U^+ =\ \emptyset}} d_{U^-}\sigma.
\end{equation*}
For $U = \{u_1 < \dots < u_r\} \in P{n \choose r}$ with $U^- =\, \emptyset$ and $0 \leq j \leq i$ we can use the simplicial identities to write 
\begin{equation*}
d_jd_{U^+} = d_j d_{u_1} \dots \, d_{u_r} = d_{u_1} \dots d_{u_l}\, d_{j+l}\,  d_{u_l+1} \dots\, d_{u_r}
\end{equation*}
with $u_l < j+l < u_{l+1}$. Notice that if $j \equiv 1$ mod 2 and $l < r$ then 
\begin{equation*}
V = \{u_1 < \dots < u_l < j+l < \widehat{u}_{l+1} < \dots < u_r\} \in \textstyle{P{m \choose r}}
\end{equation*}
with $V^- = \emptyset$ and, calling $k = u_{l+1} - l - 1$, 
\begin{equation*}
(-1)^j d_j d_U + (-1)^{k} d_{k} d_V = 0.
\end{equation*}
If $j \equiv 0$ mod 2 and $1 < l$ then 
\begin{equation*}
W = \{u_1 < \dots < \widehat{u}_l < j+l < u_{l+1} < \dots < u_r\} \in \textstyle{P{m \choose r}}
\end{equation*}
with $W^- = \emptyset$ and, calling $k = u_l - l$,
\begin{equation*}
(-1)^j d_j d_U + (-1)^{k} d_{k} d_W = 0.
\end{equation*}
This implies that the only non-zero terms are of the form
\begin{equation*}
\begin{cases}
d_{u_1} \dots d_{u_r} d_{j+r} & j \text{ odd}\\
d_{j} d_{u_1} \dots d_{u_r}   & j \text{ even}
\end{cases}
\end{equation*}
for $U = \{u_1 < \dots < u_r\} \in P{n \choose r}$ with $U^- =\, \emptyset$. Therefore,
\begin{equation*}
\partial \sigma_{i} = 
\sum_{\substack{U \in P{m \choose r+1} \\ U^- =\ \emptyset}} d_{U^+}\sigma 
\quad - \quad
\sum_{\substack{U \in P{m \choose r+1} \\ U^+ =\ \emptyset}} d_{U^-}\sigma
\end{equation*}
as claimed.

\section{Proof of Theorem \ref{theorem: dold-kan correspondence for globular sets}}

We will prove Theorem \ref{theorem: dold-kan correspondence for globular sets} by establishing a sequence of lemmas. Unless stated otherwise, all algebraic constructions are taken over a general commutative and unital ring $R$. 

\begin{lemma} \label{lemma: lift well defined for objects}
	For any globular set $X$ the triple $\big( \chains(X), \Delta, \varepsilon \big)$ is a counital cosymmetric $R$-coalgebras.
\end{lemma}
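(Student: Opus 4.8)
The plan is to verify directly that the triple $\big(\chains(X),\Delta,\varepsilon\big)$ defined in Definition \ref{definition: globular coalgebra} satisfies the three requirements of a counital cosymmetric $R$-coalgebra: that each $\Delta_k$ is well defined on the quotient $\chains(X)$, that the family $\{\Delta_k\}$ assembles into an $R[\Sigma_2]$-linear chain map, i.e.\ that relation (\ref{equation: Delta map in terms of Delta_k maps}) holds, and that the two counitality triangles commute. Since the claimed formula for $\Delta_k(x)$ is given on generators $x \in X_n$, the first step is to check that $\Delta_k$ descends to the quotient by $R\{i_n(X_{n-1})\}$: if $x = i_n(y)$ for $y \in X_{n-1}$, then the conventions for $t_k$ and $s_k$ (which, per the excerpt, are defined on all of $X$ and depend only on the target degree) force $t_k x = t_k y$ and $s_k x = s_k y$ in $\chains(X)$, and one must confirm that the three cases $n<k$, $n=k$, $k<n$ of the formula are compatible with the relations $\partial_n = t_{n-1}-s_{n-1}$ and with the identification $i_n(y) \mapsto 0$; in particular for $n=k$ the term $x\otimes x$ becomes $i_k(y)\otimes i_k(y) = 0$, and for $k<n$ one checks the mixed terms vanish or collapse appropriately.

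The main computation is verifying (\ref{equation: Delta map in terms of Delta_k maps}), namely
\begin{equation*}
\partial \Delta_k - (-1)^k \Delta_k \partial = \big(1 + (-1)^k T\big)\Delta_{k-1},
\end{equation*}
on a generator $x \in X_n$, with the convention $\Delta_{-1}=0$. I would split this into the three regimes governed by the relation between $n$ and $k$. For $n < k$ both sides should vanish after accounting for $\Delta_{k-1}$ possibly being nonzero only when $n = k-1$. The boundary case $n = k$ is where $\Delta_k(x) = x\otimes x$ and $\Delta_{k-1}(x) = t_{k-1}x \otimes x + (-1)^{nk}x\otimes s_{k-1}x$; here one computes $\partial(x\otimes x) = \partial x \otimes x + (-1)^n x \otimes \partial x = (t_{k-1}x - s_{k-1}x)\otimes x + (-1)^n x\otimes(t_{k-1}x - s_{k-1}x)$ and matches it against $(-1)^k(x\otimes x)\partial = 0$ (since $\partial$ on the right factor... actually $\Delta_k\partial x$ with $x$ of degree $k$ lands in degree $k-1$, handled by the third case) plus $(1+(-1)^kT)\Delta_{k-1}x$, using $T(a\otimes b) = (-1)^{|a||b|}b\otimes a$ and tracking the sign $(-1)^{(n+1)k}$ in Definition \ref{definition: globular coalgebra} together with the globe relations $t_{k-1}t_k = s_{k-1}t_k$ etc.\ from (\ref{equation: relations between generating morphisms}). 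The generic case $k < n$ is the most laborious: one expands $\partial\big(t_k x\otimes x + (-1)^{(n+1)k}x\otimes s_k x\big)$ and $\Delta_k(\partial x) = \Delta_k(t_{n-1}x - s_{n-1}x)$ using the formula at degree $n-1$, and shows the difference equals $(1+(-1)^kT)$ applied to $t_{k}x\otimes x + (-1)^{(n+1)k}\cdot\text{(sign adjustment)}\,x\otimes s_k x$ read at level $k-1$; the bookkeeping of the exponents $(n+1)k$ versus $nk$ across the Koszul signs of $\partial$ and $T$ is the delicate point. The counitality triangles are comparatively routine: $(1\otimes\varepsilon)\Delta_k(x)$ picks out only the terms whose right tensor factor is a $0$-simplex, which for $k>0$ forces the relevant projections to vanish and for $k=0$ recovers $t_0 x$ (a point) with coefficient matching $(\varepsilon_W\otimes\id)$, and symmetrically for $(\varepsilon\otimes 1)\Delta_k$.

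The step I expect to be the main obstacle is the sign arithmetic in the generic case $k<n$ of relation (\ref{equation: Delta map in terms of Delta_k maps}): reconciling the exponent $(-1)^{(n+1)k}$ appearing in $\Delta_k$ at level $n$ with the exponent $(-1)^{nk}$ it becomes at level $n-1$ after applying $\partial$, while simultaneously carrying the Koszul sign $(-1)^{\deg}$ from the Leibniz rule and the sign $(-1)^{rs}$ hidden in $T$, so that everything collapses to exactly $\big(1+(-1)^kT\big)\Delta_{k-1}$. A clean way to organize this is to treat $\pm$ uniformly by writing $\Delta_k(x) = t_kx\otimes x + (-1)^{(n+1)k}x\otimes s_kx$ and noting that $T$ exchanges the two summand-types with a controlled sign, reducing the identity to a single relation among four terms $t_{k}t_{n-1}x$, $t_ks_{n-1}x$, $s_kt_{n-1}x$, $s_ks_{n-1}x$, which the globe relations (\ref{equation: relations between generating morphisms}) collapse pairwise.
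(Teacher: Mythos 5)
Your proposal follows essentially the same route as the paper: a direct case-by-case verification of relation (\ref{equation: Delta map in terms of Delta_k maps}) on generators $x\in X_n$, followed by the routine counit check using $\Delta_0$ and $\varepsilon$ on $X_0$. The only real difference is granularity --- the paper splits your regime $k<n$ further into $k=n-1$ and $k<n-1$ (and handles $k=n+1$ separately from $k>n+1$), because the formulas for $\Delta_k(\partial x)$ and $\Delta_{k-1}(x)$ change at those boundaries; this is exactly the sign/case bookkeeping you flag as the delicate point, so be careful that in your $n=k$ case the correct exponent is $(-1)^{(n+1)(k-1)}=(-1)^{k+1}$, not $(-1)^{nk}$.
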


\begin{proof}
	Showing that $\Delta : W \otimes \chains(X) \to \chains(X) \otimes \chains(X)$ is a $R[\Sigma_2]$-linear chain map is equivalent to establishing (\ref{equation: Delta map in terms of Delta_k maps}) for all $k \geq 0$. We will split the verification into six cases. For the remainder of this proof let us consider $x \in X_n$.  \par\ \\
	If $k = n = 0$ :
	\begin{equation*}
	\begin{split}
	\partial \Delta_k x - (-1)^{k} \Delta_k \partial x = \,& 
	\partial (x \otimes x) \\ =\, & 
	0.
	\end{split}
	\end{equation*}
	If $k = 0 < n$ :
	\begin{equation*}
	\begin{split}
	\partial \Delta_k x - (-1)^{k} \Delta_k \partial x = \,& 
	t_0 x \otimes (t_{n-1} - s_{n-1}) x \ + \ (t_{n-1} - s_{n-1}) x \otimes s_0 x \\ - \, &
	t_0 t_{n-1} x \otimes t_{n-1} x \ - \ t_{n-1} x \otimes s_0 t_{n-1} x \\ +\, &
	t_0 s_{n-1} x \otimes s_{n-1} x \ + \ t_{n-1} x \otimes s_0 s_{n-1} x \\ =\, &
	0.
	\end{split}
	\end{equation*}
		If $0 < k = n+1$ :
	\begin{equation*} 
	\begin{split}
	\partial \Delta_k x - (-1)^k \Delta_k \partial x\, =\, & 
	0 \\ =\, &
	x \otimes x - x \otimes x \\ =\, &
	\big( 1 + (-1)^{n+1} T \big)(x \otimes x)  \\ =\, &
	\big( 1 + (-1)^{k} T \big) \Delta_{k-1}(x).
	\end{split}
	\end{equation*}
	If $0 < k = n$ :
	\begin{equation*} 
	\begin{split}
	\partial \Delta_k x - (-1)^k \Delta_k \partial x\, =\, & 
	(t_{n-1} - s_{n-1}) x \otimes x \ + \ (-1)^{n} \, x \otimes (t_{n-1} - s_{n-1}) x \\ =\, &
	\big( t_{k-1} x \otimes x\, - (-1)^{n} \, x \otimes s_{k-1} x \big) \\ +\, &
	(-1)^n \big( x \otimes t_{k-1} x\, - \, (-1)^n s_{k-1}x \otimes x\big)  \\ =\, &
	\big( 1 + (-1)^n T \big) \big( t_{k-1} x \otimes x\, - (-1)^{n} \, x \otimes s_{k-1} x \big) \\ =\, &
	\big( 1 + (-1)^n T \big) \big( t_{k-1} x \otimes x\, + (-1)^{(n+1)(k-1)} \, x \otimes s_{k-1} x \big) \\ =\, &
	\big( 1 + (-1)^n T \big) \Delta_{k-1}(x).
	\end{split}
	\end{equation*}
	If $0 < k = n-1$ :
	\begin{equation*}
	\begin{split}
	\partial \Delta_k x - (-1)^k \Delta_k \partial x=\,& 
	(t_{k-1} - s_{k-1}) t_k x \otimes x\ + \ (-1)^{(n-1)} t_{k}x \otimes (t_{n-1} - s_{n-1}) x \\ +\, &
	(-1)^{n+1} (t_{n-1} - s_{n-1}) x \otimes s_k x \ - \ x \otimes (t_{k-1} - s_{k-1}) s_k x \\ +\, &
	(-1)^{k+1} \big( t_{k} x \otimes t_{k} x - s_{k} x \otimes s_{k} x \big) \\ =\, &
	\big(t_{k-1} x \otimes x\, + \, x \otimes s_{k-1} x \big) \ - \ \big( x \otimes t_{k-1} x\, + \, s_{k-1}x \otimes x\big)  \\ =\, &
	(1 + (-1)^k T) \big(t_{k-1} x \otimes x\, + \, x \otimes s_{k-1} x \big) \\ =\, &
	(1 + (-1)^k T) \Delta_{k-1}(x).
	\end{split}
	\end{equation*}
	If $0 < k < n-1$ :
	\begin{equation*}
	\begin{split}
	\partial \Delta_k x - (-1)^{k} \Delta_k \partial x=\,& 
	(t_{k-1} - s_{k-1}) t_k x \otimes x \, + \, (-1)^{k} t_k x \otimes (t_{n-1} - s_{n-1}) x \\ +\, &
	(-1)^{(n+1)k} \big( (t_{n-1} - s_{n-1}) x \otimes s_k x\, + \, (-1)^{n} x \otimes (t_{k-1} - s_{k-1}) s_k x \big) \\ +\, &
	(-1)^{k+1} \big(t_k t_{n-1} x \otimes t_{n-1} x \, + \, (-1)^{nk} t_{n-1} x \otimes s_{k} t_{n-1} x \big) \\ +\, &
	(-1)^k \big( t_k s_{n-1} x \otimes s_{n-1} x \, + \, (-1)^{nk} s_{n-1} x \otimes s_{k} s_{n-1} x \big) \\ =\, &
	\big( t_{k-1} x \otimes x\, + (-1)^{(n+1)(k+1)} \, x \otimes s_{k-1} x \big) \\ +\, &
    \big( (-1)^{(n(k+1) + k)} x \otimes t_{k-1} x\, - \, s_{k-1}x \otimes x\big)  \\ =\, &
	(1+(-1)^k T) \big( t_{k-1} x \otimes x\, + \, (-1)^{(n+1)(k-1)} x \otimes s_{k-1} x \big) \\ =\, &	
	(1+(-1)^k T) \Delta_{k-1}(x).
	\end{split}
	\end{equation*}
	Showing that $\varepsilon$ is a counit for $\Delta$ follows from the fact that for any $x \in X_n$
	\begin{equation*}
	\Delta_0 x =
	\begin{cases}
	t_0 x \otimes x + x \otimes s_0 x & n \neq 0 \\ 
	x \otimes x & n = 0
	\end{cases}
	\end{equation*} 
	and $\varepsilon (x') = 1$ for any $x' \in X_0$.
\end{proof}

\begin{lemma} \label{lemma: lift well defined for maps}
	For any morphism $F : X \to Y$ of globular sets, the chain map
	\begin{equation*}
	\chains(F) : \chains(X) \to \chains(Y)
	\end{equation*} 
	is a coalgebra map. 
\end{lemma}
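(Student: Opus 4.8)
The plan is to reduce the commutativity of the square defining a coalgebra map to a componentwise statement about the maps $\Delta_k$, and then check it on basis elements. Write $f = \chains(F)$. Under the adjunction isomorphism used to introduce the $\Delta_k$ in (\ref{equation: Delta map in terms of Delta_k maps}), the square in the definition of a coalgebra map, evaluated on the free $R[\Sigma_2]$-generators of $W$, is equivalent to the family of identities
\begin{equation*}
(f \otimes f)\, \Delta_k' = \Delta_k\, f \qquad (k \geq 0),
\end{equation*}
where $\Delta'$ and $\Delta$ denote the structure maps of $\chains(X)$ and $\chains(Y)$. Since all maps involved are $R$-linear and $\chains(X)$ is generated over $R$ by the classes $[x]$ of simplices $x \in X_n$, it suffices to verify each identity on such a class.

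First I would note that $f$ is indeed a morphism of augmented differential graded $R$-modules, so that it qualifies as a candidate coalgebra map at all: because $F$ is a natural transformation it commutes with $i_n = X(\iota_n)$, hence carries degenerate elements to degenerate elements and is well defined on the quotients defining $\chains$; it commutes with $t_{n-1}$ and $s_{n-1}$, hence with $\partial = t_{n-1} - s_{n-1}$; and it preserves $X_0$, hence $\varepsilon_Y \circ f = \varepsilon_X$. (This is just the assertion, already implicit in the definition of the functor $\chains$, that the lift to augmented differential graded $R$-modules is natural.)

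Next I would split the verification of $(f \otimes f)\Delta_k [x] = \Delta_k f[x]$ according to the three cases of Definition \ref{definition: globular coalgebra}, with $x \in X_n$. If $n < k$, both sides vanish, since $f[x]$ again has degree $n$. If $n = k$, both sides equal $f[x] \otimes f[x]$. If $k < n$, applying $f \otimes f$ termwise to $\Delta_k[x] = t_k x \otimes x + (-1)^{(n+1)k}\, x \otimes s_k x$ and comparing with $\Delta_k f[x]$ reduces the identity to $f(t_k x) = t_k f(x)$ and $f(s_k x) = s_k f(x)$ in $\chains(Y)$; these hold because $F$ is natural and the maps $t_k, s_k : X(n) \to X(k)$ are, by the convention fixed after (\ref{equation: relations between generating morphisms}), induced from morphisms of $\G$, so $F$ intertwines them with the corresponding maps on $Y$.

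There is no genuine obstacle here; the only point requiring care is the bookkeeping around the generalized maps $t_k$ and $s_k$, namely that the composite of generating morphisms of $\G$ producing them on $X$ is matched by the same composite on $Y$ under the natural transformation $F$. This is automatic from functoriality, but stating it explicitly is what makes the naturality step airtight.
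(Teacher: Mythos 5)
Your proposal is correct and follows essentially the same route as the paper: reduce the coalgebra-map condition to the componentwise identities $(f\otimes f)\Delta_k = \Delta_k f$ and verify them using the naturality of $F$ with respect to $t_k$ and $s_k$ (and $i_n$ for the augmented structure). You are somewhat more explicit than the paper in treating the cases $n<k$ and $n=k$ and the well-definedness on the quotient, but there is no substantive difference in approach.
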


\begin{proof}
	Denote $\chains(F)$ by $f$. Since $F(X_n) \subseteq Y_n$ we have $\varepsilon f = f \varepsilon$ and since $F t_n = t_n F$ and $F s_n = s_n F$ we have
	\begin{equation*}
	\begin{split}
	\big( f \otimes f \big) \Delta_k (x) 
	&= F(t_k x) \otimes F(x) \, + \, (-1)^{(n+1)k} F(x) \otimes F(s_k x) \\ 
	&= t_k F(x) \otimes F(x) \, + \, (-1)^{(n+1)k} F(x) \otimes s_k F(x) \\
	&= \Delta_k f(x)
	\end{split}
	\end{equation*}
	for any $x \in X$ and $k \geq 0$.
\end{proof}

\begin{lemma}
	If $R$ is an integral domain, the function
	\begin{equation*}
	\Set^\Gop(X,Y) \to \coAlg_R \big( \chains(X), \chains(Y) \big)
	\end{equation*}
	is a bijection.
\end{lemma}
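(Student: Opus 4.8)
We need to show that the map $\Set^\Gop(X,Y) \to \coAlg_R(\chains(X), \chains(Y))$ is both injective and surjective. The two lemmas preceding this statement already establish that the assignment is well-defined on objects and morphisms, so what remains is exactly this bijection. My plan is to analyze the structure imposed on a coalgebra map by the $\Delta_k$'s, particularly $\Delta_n$ and $\Delta_{n-1}$ on elements of degree $n$, and extract from it a morphism of globular sets.

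\emph{Injectivity.} Suppose $F, F' : X \to Y$ induce the same coalgebra map $f = \chains(F) = \chains(F')$. Since $\chains$ is a linear functor and $X_n$ maps to a basis of $\chains(X)$ modulo degeneracies, I would first argue that $F$ and $F'$ agree on non-degenerate elements. The subtlety is that $\chains(X)_n = R\{X_n\}/R\{i_n(X_{n-1})\}$ collapses the degenerate simplices. The key observation is that a coalgebra map remembers the group-like elements (Lemma \ref{lemma: coalgebra maps take group-like to group-like or 0} applies since globular $R$-coalgebras are group-like): $f$ sends each basis element $x \in X_n$ either to $0$ or to a basis element $y \in Y_n$. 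If $f(\bar x) = \bar y \neq 0$ then $\Delta_n$ forces the matched element to be recovered; if $f(\bar x) = 0$ then by applying $\Delta_{n-1}$ (which for $k<n$ has the form $t_k x \otimes x + \pm x \otimes s_k x$) one sees the corresponding relation on the boundary data. The upshot: $F$ on $X_n \setminus i_n(X_{n-1})$ is determined by $f$, and then since $F$ commutes with $i_n$, it is determined on all of $X_n$.

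\emph{Surjectivity.} This is the heart of the matter. Given a coalgebra map $f : \chains(X) \to \chains(Y)$, I must produce a globular-set morphism $F$ with $\chains(F) = f$. By Lemma \ref{lemma: coalgebra maps take group-like to group-like or 0}, for each basis element $x \in X_n$ (a non-degenerate element), $f(\bar x)$ is either $0$ or a single basis element $\bar y$ with $y \in Y_n$. The plan is to \emph{define} $F(x) = y$ in the nonzero case, and handle the zero case by setting $F(x) = i_n(z)$ for an appropriate $z \in Y_{n-1}$ — this $z$ is forced by the requirement that $F$ be globular, i.e.\ compatible with $t_{n-1}, s_{n-1}$. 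Concretely: if $f(\bar x) = 0$, then applying the coalgebra-map condition to $\Delta_{n-1}$ gives $(f\otimes f)(t_{n-1}x \otimes x + \pm\, x \otimes s_{n-1}x) = \Delta_{n-1}(0) = 0$, but more usefully $\Delta_{n-1}$ unpacks via the relations; one extracts that $f(\overline{t_{n-1}x}) = f(\overline{s_{n-1}x})$ in $\chains(Y)_{n-1}$, and a downward induction on $n$ lets us say $F$ is already defined on $X_{n-1}$, so $t_{n-1}F(x)$ and $s_{n-1}F(x)$ had better agree; setting $F(x) = i_n$ of that common value does the job. One then checks by induction that this $F$ commutes with all of $\sigma_n, \tau_n, \iota_n$ — compatibility with $\iota_n$ is built in, and compatibility with $\tau_n, \sigma_n$ follows from reading off $\Delta_n$ and $\Delta_{n-1}$ on $x$ and matching $t_n, s_n$. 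Finally $\chains(F) = f$ by construction on basis elements.

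\emph{Main obstacle.} The delicate point is the zero case $f(\bar x) = 0$ together with showing the resulting $F$ is genuinely well-defined and globular — in particular that the element $z \in Y_{n-1}$ with $F(x) = i_{n-1}(\dots)$ is forced and consistent, and that no contradiction arises between the constraints coming from $\Delta_k$ for different $k$. I expect this to require a careful simultaneous induction on degree establishing: (a) $F$ is defined on $X_{\leq n}$, (b) $F$ commutes with $t_k, s_k$ for $k < n$, (c) $\chains(F)$ agrees with $f$ in degrees $\leq n$. The globular relations \eqref{equation: relations between generating morphisms}, especially $\iota_{n+1}\tau_n = \iota_{n+1}\sigma_n = \id$, are what keep the two candidate boundary values coherent, and the fact that $R$ is an integral domain is exactly what makes the dichotomy ``$f(\bar x)$ is $0$ or a basis element'' available.
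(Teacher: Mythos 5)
Your plan is, in outline, the same as the paper's proof: injectivity by a short induction, and surjectivity by using Lemma \ref{lemma: coalgebra maps take group-like to group-like or 0} to get the dichotomy ``$f(\bar x)$ is a basis element or $0$'', defining $F(x)=f(x)$ in the first case and $F(x)=i_n F(t_{n-1}x)$ in the second, and then verifying globularity degree by degree. Two of your intermediate claims need adjusting, though. First, when $f(\bar x)=0$ the identity $f(\overline{t_{n-1}x})=f(\overline{s_{n-1}x})$ does not come from $\Delta_{n-1}$ (that equation reads $0=0$ because every term contains a factor $f(\bar x)$); it comes from the plain chain-map property $f\partial=\partial f$. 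This is minor.

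The substantive gap is the mixed case $f(\bar x)\neq 0$ but $f(\overline{t_{n-1}x})=0$ (or the analogue for $s_{n-1}$), which your sketch subsumes under ``reading off $\Delta_n$ and $\Delta_{n-1}$ and matching''. In that case the coalgebra equation does not directly give $t_{n-1}F(x)=F(t_{n-1}x)$: comparing the $\mathrm{C}_{n-1}\otimes \mathrm{C}_n$ components yields only $\overline{t_{n-1}F(x)}\otimes \overline{F(x)}=0$, hence (since $R$ is a domain and $\overline{F(x)}$ is a basis element) that $t_{n-1}F(x)$ is degenerate, say $i_{n-1}(z)$. One must then apply $t_{n-2}$ and the relation $t_{n-2}\,i_{n-1}=\id$ to identify $z=t_{n-2}F(x)$, which reduces the desired identity to $F(t_{n-2}x)=t_{n-2}F(x)$ one degree lower; if $f(\overline{t_{n-2}x})$ also vanishes the regression continues, and it terminates only because $f$ preserves the augmentation, so it cannot vanish in degree $0$. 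You correctly flag ``a careful simultaneous induction'' as the remaining obstacle, but this specific mechanism --- degeneracy of $t_{n-1}F(x)$ forced by the tensor equation, followed by a downward regression that bottoms out at the augmentation --- is the idea your proposal would still need to supply; without it the compatibility with $\tau$ and $\sigma$ is not established.
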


\begin{proof}
	Injectivity is immediate. For establishing surjectivity, let us consider $f \in \coAlg_R\big( \chains(X), \chains(Y) \big)$. We will construct $F \in \Set^\Gop(X, Y)$ such that $\chains(F) = f$. From Lemma \ref{lemma: coalgebra maps take group-like to group-like or 0} we know that for any $x \in X$ either $f(x) = y$ for some $y \in Y$ or $f(x) = 0$. Let $x \in X_n$ not in the image of $i_n$. Define
	\begin{equation*}
	F(x) =
	\begin{cases}
	f(x) & f(x) \neq 0 \\
	i_n \big( F(t_{n-1} x) \big) & f(x) = 0.
	\end{cases}
	\end{equation*}
	This recursive definition is well defined because of the augmentation preserving property of $f$. For $x = i_n(y)$ we recursively define $F(x) = i_n F(y)$. We will prove next that $F: X \to Y$ is a map of globular sets.
	
	Let $x \in X_n$ and without loss of generality assume it is not in the image of $i_n$. Let us first assume $f(x) = 0$, then 
	\begin{equation*}
	t_{n-1} F(x) = t_{n-1} i_n F(t_{n-1} x) = F(t_{n-1} x)
	\end{equation*} 
	and
	\begin{equation*}
	s_{n-1} F(x) = s_{n-1} i_n F(t_{n-1} x) = F(t_{n-1} x) \stackrel{?}{=} F(s_{n-1} x).
	\end{equation*} 
	We claim that $F(t_{n-1} x)$ must equal $F(s_{n-1} x)$. Observe that since $f$ is a chain map 
	\begin{equation} \label{equation: ham}
	\boxed{f(x) = 0} \Rightarrow \boxed{f(t_{n-1} x) = f(s_{n-1} x)}
	\end{equation}
	If $f(t_{n-1} x) \neq 0$ or $f(s_{n-1} x) \neq 0$ implication (\ref{equation: ham}) establishes the claims. If $f(t_{n-1} x) = f(s_{n-1} x) = 0$ we have
	\begin{equation*}
	F(t_{n-1} x) = i_{n-1} F( t_{n-2} t_{n-1} x) = i_{n-1} F(t_{n-2} s_{n-1} x) = F(s_{n-1} x).
	\end{equation*}
	Let us now assume $f(x) \neq 0$. For any $0 < k \leq n$ the identity
	\begin{equation*}
	\Delta_{n-k} f(x) = (f \otimes f) \Delta_{n-k} (x)
	\end{equation*} 
	reads
	\begin{equation} \label{equation: jamon}
	\begin{split}
	t_{n-k} F(x) \otimes F(x)\ +\ &(-1)^{(n+1)k} F(x) \otimes s_{n-k} F(x) \\ =\ &f(t_{n-k} x) \otimes F(x)\ +\ (-1)^{(n+1)k} F(x) \otimes f(s_{n-k} x).
	\end{split}
	\end{equation} 
	Therefore, for $0 < k \leq n$
	\begin{equation} \label{equation: queso}
	\begin{split}
	\boxed{f(t_{n-k} x) \neq 0} &\Rightarrow \boxed{t_{n-k} F(x) = F(t_{n-k} x)}  \\
	\boxed{f(s_{n-k} x) \neq 0} &\Rightarrow \boxed{s_{n-k} F(x) = F(s_{n-k} x)\!} 
	\end{split}
	\end{equation}
	Therefore, if both $f(t_{n-1} x) \neq 0$ and $f(s_{n-1} x) \neq 0$ we are done. 
	
	Let us assume $f(t_{n-1} x) = 0$ and notice that $n-1$ must be greater than $0$. It follows from (\ref{equation: jamon}) that $t_{n-1} F(x)$ is in the image of $i_{n-1}$. Writing $t_{n-1} F(x) = i_{n-1} y$ and applying $t_{n-2}$ to this identity gives $t_{n-2}F(x) = y$. Hence,
	\begin{equation*}
	F(t_{n-1}x) \stackrel{\text{def}}{=} i_{n-1} F(t_{n-2}t_{n-1} x) = i_{n-1} F(t_{n-2} x) 
	\stackrel{?}{=} i_{n-1} t_{n-2} F(x) = i_{n-1} y = t_{n-1} F(x).
	\end{equation*}  
	Similarly, when $f(s_{n-1}x) = 0$ we have
	\begin{equation*}
	F(s_{n-1}x) \stackrel{\text{def}}{=} i_{n-1} F(t_{n-2}s_{n-1} x) = i_{n-1} F(t_{n-2} x) 
	\stackrel{?}{=} i_{n-1} t_{n-2} F(x) = s_{n-1} F(x).
	\end{equation*}
	Therefore, we have reduced both claims: $F(t_{n-1} x) = t_{n-1} F(x)$ when $f(t_{n-1} x) = 0$ and $s_{n-1} F(x) = F(s_{n-1} x)$ when $f(s_{n-1} x) = 0$ to showing $F(t_{n-2} x) = t_{n-2} F(x)$. If $f(t_{n-2} x) \neq 0$ then (\ref{equation: queso}) finishes the proof. If not, we repeat the argument and reduce it to $F(t_{n-3} x) = t_{n-3} F(x)$. Because of the augmentation preserving property of $f$ this regression has to end.
\end{proof}

\bibliographystyle{alpha}
\bibliography{globularsets}

\end{document}